\author{Ofir Gorodetsky} \title{The variance of integers without small prime factors in short intervals}
\date{}
\theoremstyle{plain}
\newtheorem*{thm*}{Theorem}
\newtheorem{thm}{Theorem}[section]
\newtheorem{lem}[thm]{Lemma}  
\newtheorem{proposition}[thm]{Proposition}
\newtheorem{cor}[thm]{Corollary}
\newtheorem{conj}{Conjecture} 
\theoremstyle{remark}
\newtheorem{remark}{Remark}
\newcommand{\RR}{\mathbb{R}}
\newcommand{\NN}{\mathbb{N}}
\newcommand{\ZZ}{\mathbb{Z}}
\newcommand{\CC}{\mathbb{C}}
\newcommand{\PrimesInd}{\mathbf{1}_{\mathbb{P}}}	
\newcommand{\Prob}{P}
\newcommand{\Psiomega}{\Psi_{d_2\mu^2}}
\numberwithin{equation}{section}
\newcommand{\Addresses}{{
		\bigskip
		\footnotesize
		
		\textsc{Department of Mathematics, Technion - Israel Institute of Technology, Haifa, Israel}\par\nopagebreak
		\textit{E-mail address:} \texttt{ofir.gor@technion.ac.il}
}}
\begin{document}
	
	\maketitle
	\begin{abstract}
		The variance of primes in short intervals relates to the Riemann Hypothesis, Montgomery's Pair Correlation Conjecture and the Hardy--Littlewood Conjecture. In regards to its asymptotics, very little is known unconditionally. We study the variance of integers without prime factors below $y$, in short intervals. We use complex analysis and sieve theory to prove an unconditional asymptotic result in a range for which we give evidence is qualitatively best possible. We find that this variance connects with statistics of $y$-smooth numbers, and, as with primes, is asymptotically smaller than the naive probabilistic prediction once the length of the interval is at least a power of $y$.
	\end{abstract}
	\section{Introduction}
	\subsection{The variance of primes in short intervals}
	 Saffari and Vaughan \cite[Lem.~7]{Saffari1977}, building on pioneering work of Selberg \cite[pp.~160-178]{selberg}, studied the variance of primes in short intervals. They showed that, conditionally on the Riemann Hypothesis (RH), one has
		\[  \frac{1}{X} \int_{0}^{X} \big( \sum_{x<n\le x+H}\Lambda(n) -H \big)^2 {\rm d}x \ll H (1 + \log (X/H))^2\]
		for $1 \le H\le X$, which implies that 
\begin{equation}\label{eq:mont conj}
\sum_{n \in (x,x+H]} \Lambda(n) = H + O_{\varepsilon}(x^{\varepsilon}H^{1/2})
\end{equation}
holds for almost all $x \in [1,X]$, as $X \to \infty$. This is one motivation for a conjecture of Montgomery and Vaughan, saying \eqref{eq:mont conj} holds for \emph{all} $1 \le H \le x$ \cite[Conj.~13.4]{Montgomery2007}.
Goldston and Montgomery showed \cite[Cor.]{Goldston1987} -- on RH -- that
	\begin{equation}\label{eq:var von}
	\frac{1}{X} \int_{0}^{X}\big( \sum_{x < n \le x+H} \Lambda(n) - H\big)^2 {\rm d}x \sim H \log X \big(1  - \frac{\log H}{\log X}\big),\qquad X \to \infty
\end{equation}
	uniformly for $1 \le H \le X^{1-\varepsilon}$ is equivalent to the Strong Pair Correlation Conjecture for the Riemann zeta function. This is particularly interesting as the Cram\'er model predicts a larger asymptotic, omitting the $-\log H/\log X$ term in \eqref{eq:var von}, a term which becomes significant once $H$ is at least a power of $X$. This term can also be detected, in some range of $H$, assuming RH and the Hardy--Littlewood Conjecture \cite{Montgomery2002,Montgomery2004}. 
	The conjectured estimate \eqref{eq:var von} implies that almost all intervals $[x,x+\omega(x)\log x]$ contain $\omega(x)(1+o(1))$ primes, where $\omega(x)=o(x)$ is any function tending to $\infty$ as $x \to \infty$.
	
	Let $\PrimesInd$ be the indicator of primes. All of these results have equivalent analogues with $\PrimesInd$ instead of $\Lambda$, for instance
	\begin{equation}\label{eq:var primes}
		\frac{1}{X} \int_{2}^{X}\bigg( \sum_{x < n \le x+H} \PrimesInd(n) - \int_{x}^{x+H} \frac{{\rm d}t}{\log t}\bigg)^2 {\rm d}x \sim \frac{H}{\log X} \big(1  - \frac{\log H}{\log X}\big)
	\end{equation}
	is equivalent to \eqref{eq:var von}, see Proposition 2 in the appendix of \cite{Kuperberg2020} (cf.~\cite{Montgomery2010}).
	\subsection{Integers without small prime factors}
	Our aim in this work is to obtain unconditional results about the variance in short intervals of integers without small prime factors, known in the literature as rough numbers or sifted numbers. The motivation is that rough numbers generalize primes. Indeed, integers with no prime factors $\le \sqrt{x}$ coincide with primes in the interval $(\sqrt{x},x]$. Hence it would be interesting to see how large can we take $y$ as a function of $H$ (or alternatively, how large we can take $H$ to be given $y$) and still obtain an asymptotic result for the corresponding variance.
	Let $y\ge 2$ be a parameter (that might depend eventually on $H$ and $X$). We let $\alpha_y$ be the indicator of integers without prime factors $\le y$: 
	\[ \alpha_y(n) = \begin{cases} 1 & \text{if every prime dividing }n\text{ is}> y,\\ 0& \text{otherwise.}\end{cases}\]
For fixed $y$, $\alpha_y$ is a periodic function whose period is the primorial \[q_y:=\prod_{p \le y} p = e^{y(1+o(1))}\] by the Chinese remainder theorem, and its mean value is 
	\begin{equation}\label{eq:mertens3}
		\Prob_y := \prod_{p \le y} \left(1-\frac{1}{p}\right) = \frac{e^{-\gamma}}{\log y}\left(1+O\left(\frac{1}{\log y}\right)\right)
	\end{equation}
	by Mertens' theorem, where throughout $\gamma$ is the Euler--Mascheroni constant. Our object of study is
	\[ V(X,H,y) := \frac{1}{X}\int_{0}^{X} \big(\sum_{x<n\le x+H} \alpha_y(n) - H\Prob_y\big)^2 {\rm d}x.\]
	 Throughout, $y$, $H$ and $X$ will be integer parameters\footnote{As our parameters are integers, the continuous variance $V(X,H,y)$ coincides with the discrete variance $\sum_{n=0}^{X-1} (\sum_{i=1}^{H}\alpha(n+i)-H\Prob_y)^2/X$.} tending together to $\infty$. As $\alpha_y$ is $q_y$-periodic, \[V(X,H,y) = V(X,H-q_y,y)\]
	 if $H>q_y$. Hence we may assume without loss of generality that $H \le q_y$, implying $\limsup (\log H)/y \le 1$ by the Prime Number Theorem. Our main result will hold under a condition which is just slightly more restrictive, namely
\[y \ge (2+\varepsilon)\log H\]
for some (arbitrary) $\varepsilon>0$. 	

If one is not interested in asymptotics of the variance, but in bounds that yield almost-all results on $\alpha_y$ in short intervals, then extremely strong results were obtained by Friedlander and Iwaniec \cite[Ch.~6.10]{Friedlander2010}. Instead of studying the variance of $\alpha_y$ asymptotically, they upper bound the variance of (upper-bound and lower-bound) sieve weights in short intervals \cite[Prop.~6.26]{Friedlander2010}. From this they deduce that, if $y \le X^{1/20}$ and $H/\log y \to \infty$, then $\sum_{x<n\le x+H}\alpha_y(n)\asymp H\Prob_y$ for almost all $x \in [X,2X]$  \cite[Cor.~6.28]{Friedlander2010}.
\subsection{Main result}
The main term that comes out from our analysis of $V(X,H,y)$ is a (somewhat complicated) function $M(H,y)$ of $H$ and $y$ only. It is given as the following finite product times a finite sum:
\begin{equation}\label{eq:mt}
	M(H,y) := \prod_{2<p \le y} \left(1- \frac{2}{p}\right) \sum_{n \ge 1} g_y(n) \left\{ \frac{H}{2n}\right\} \left(1- \left\{ \frac{H}{2n}\right\} \right),
\end{equation}
where $\{ x \}$ is the fractional part of $x$ and $g_y$ is the multiplicative function supported on squarefrees and given on primes by
\begin{equation}\label{eq:gydef}
	g_y(p) = \begin{cases} \frac{p}{p-2} & \text{if }2<p \le y,\\0 & \text{otherwise.}\end{cases}
\end{equation}
In \S\ref{sec:mt} we describe the asymptotics of $M(H,y)$. We are ready to state our main result.
\begin{thm}\label{thm:ranges}
Fix $\varepsilon\in (0,1)$. Suppose the parameters $y,H\le X$ tend to $\infty$ in such a way that $y \ge (2+\varepsilon)\log H$ holds. Set $a=\log \log H/\log y \in (0,  1)$. If
\begin{equation}\label{eq:sievecond}
(1+ a)\frac{\log H}{\log \log H} \le (1-\varepsilon)\frac{\log X}{\log y},
\end{equation}
 then, as $H\to \infty$,
\begin{equation}\label{eq:Vasymp}
 V(X,H,y) \sim M(H,y).
\end{equation}
\end{thm}
For example,
\begin{itemize}
	\item If we take $H=X^{A}$ for fixed $A\in (0,1/2)$, then \eqref{eq:Vasymp} holds for $(2A+\varepsilon)\log X \le y \le (\log X)^{\frac{1}{A}-1-\varepsilon}$.
	\item If we take $H=\exp((\log X)^A)$ for $A\in(0,1)$, then \eqref{eq:Vasymp} holds for $(2+\varepsilon)(\log X)^A \le y \le \exp((\log X)^{1-A})$.
\end{itemize}
The naive prediction for $V(X,H,y)$, based on modeling $(\alpha_y(n))_{n}$ by independent Bernoulli random variables with success probability $\Prob_y$, is $H\Prob_y$. By Theorem~\ref{thm:mt} below, $M(H,y)$ is asymptotic to $H\Prob_y$ as long as $H=y^{o(1)}$. However, for $H$ which is at least a power of $y$ (namely $H \ge y^{\varepsilon}$ for some $\varepsilon>0$), Theorem~\ref{thm:mt} below implies that $M(H,y)$ is asymptotically smaller than this prediction, in the same way the variance of primes is once $H$ is a power of $X$.

The range in Theorem~\ref{thm:ranges} gets worse as $H$ and $y$ increase. This is to be expected. Larger $H$ means we have less averaging present in our variance, where we think of $[0,X]$ as a union of $X/H$ disjoint intervals of size $H$. This is similar to how the results in \cite{Hall1982,Gorodetsky2021,Montgomery2004} on squarefrees and on primes hold only for $H$ sufficiently small. Larger $y$ means $\alpha_y$ becomes `closer' to the indicator of primes, and so we expect our range to deteriorate as $y$ grows, in the same way results in sieve theory deteriorate with $y$.

The following is an immediate corollary of Theorem~\ref{thm:ranges}.
\begin{cor}
Fix $\varepsilon>0$. Let $w(x)$ be a function tending to $\infty$. Given $y,H\le X$ satisfying $y\ge (2+\varepsilon)\log H$ and \eqref{eq:sievecond}, we have, for almost all $x \in [0,X]$,
	\[ \sum_{x<n \le x+H} \alpha_y(n) = H \Prob_y + O_{\varepsilon}\left( M(H,y)^{1/2} w(x)\right).\]
\end{cor}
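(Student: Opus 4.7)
The plan is to deduce the corollary from Theorem~\ref{thm:ranges} by a standard Chebyshev argument. Write $S(x) := \sum_{x<n\le x+H}\alpha_y(n) - H\Prob_y$ for the centred count, so that the theorem reads
\[ \int_0^X S(x)^2\,dx = X\,V(X,H,y) \sim X\,M(H,y).\]

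For any fixed $T \ge 1$, Chebyshev's inequality then yields
\[ \mathrm{meas}\bigl\{x \in [0,X] : |S(x)| > T\,M(H,y)^{1/2}\bigr\} \le \frac{1}{T^2 M(H,y)}\int_0^X S(x)^2\,dx \sim \frac{X}{T^2}. \]
The hypothesis $w(x) \to \infty$ means that for each fixed $T$ the inequality $w(x) \ge T$ holds outside a bounded initial segment $[0, X_0(T)]$. Hence any $x \in [X_0(T), X]$ with $|S(x)| > M(H,y)^{1/2} w(x)$ satisfies $|S(x)| > T\,M(H,y)^{1/2}$ and belongs to the set above. The exceptional set in the corollary therefore has measure at most $X/T^2 + O_T(1)$.

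Letting $T \to \infty$ (e.g.\ by choosing $T$ to be any sufficiently slowly growing function of $X$, or simply by letting $X \to \infty$ first and then $T \to \infty$) shows that the exceptional set has measure $o(X)$, which is exactly what is required for ``almost all $x \in [0,X]$''. There is no genuine obstacle in this deduction; the only mild subtlety is that the threshold $M(H,y)^{1/2} w(x)$ depends on $x$, but this is handled by passing to the uniform threshold $T\,M(H,y)^{1/2}$ at the cost of absorbing a bounded initial segment on which $w(x) < T$.
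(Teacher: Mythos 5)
Your Chebyshev argument is correct and is precisely the standard deduction one makes from a variance asymptotic such as Theorem~\ref{thm:ranges}; the paper states this corollary without proof, evidently having the same argument in mind. You also correctly identified and handled the one small subtlety, namely that the threshold $M(H,y)^{1/2}w(x)$ varies with $x$, by splitting off the bounded initial segment $[0,X_0(T)]$ where $w(x)<T$ and then letting $T\to\infty$ after $X\to\infty$.
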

Recall that an integer is called $y$-smooth if it is indivisible by primes greater than $y$. We denote by  $\Psi(x,y)$ the number of $y$-smooth integers up to $x$. Theorem~\ref{thm:mt} below tells us that $M(H,y) \ll_{\varepsilon} \Psi(H,y)$ holds for $y \ge (2+\varepsilon)\log H$. We conjecture the following, in the spirit of the conjecture \cite[Conj.~13.4]{Montgomery2007} for primes.
\begin{conj}
Fix $\varepsilon>0$.	For all $2\le y,H\le X$ satisfying $y \ge (2+\varepsilon)\log H$ and \eqref{eq:sievecond}, we have
	\[ \sum_{X<n \le X+H} \alpha_y(n) = H \Prob_y + O_{\varepsilon}\left( \min\{H,y\}^{\varepsilon}\Psi(H,y)^{1/2}\right).\]	
\end{conj}

\subsection{Estimates of the main term}\label{sec:mt}
For a given arithmetic function $f$ we let
\begin{equation}\label{eq:psif}
\Psi_{f}(x,y) = \sum_{\substack{n \le x \\ n \text{ is }y\text{-smooth}}}f(n).
\end{equation}
It turns out $M(H,y)$ is related to $\Psi_{\mu^2}(H,y)$, the number of  $y$-smooth squarefree integers up to $H$. This is because $g_y$ is supported on such numbers.  We define 
\begin{equation}\label{eq:deflam}
	\lambda(u):=e^{-\gamma}\int_{u}^{\infty}\rho(v){\rm d}v
\end{equation}
for $u \ge 0$ where $\rho$ is the Dickman function. The function $\lambda$ is implicit in works of van Lint and Richert \cite{vLR} and de Bruijn and van Lint \cite{dBL}, e.g.~\cite[Eq.~(1.3)]{dBL} says that for fixed $u\ge 0$, 
\[ \sum_{n \ge y^u,\, n\text{ is }y\text{-smooth}} 1/n \sim e^{\gamma}\lambda(u)\log y\]
as $y \to \infty$. The asymptotics of $\lambda$ are well understood: by \cite[Eq.~(2.6)]{Tenenbaum2008},
	\begin{equation}\label{eq:lambasym}
		\lambda(u) = e^{-\gamma}\frac{\rho(u)}{\xi(u)}\left(1+O(u^{-1})\right)
	\end{equation}
for $u>0$, where $\xi(u)$ is the unique solution to $e^{\xi(u)}=1+u\xi(u)$ ($\xi(u)\sim \log u$ as $u \to \infty$ by  Lemma~\ref{lem:hildlem1}). For $u \in [0,1]$,
\begin{equation}\label{eq:lam01}
\lambda(u)=1-e^{-\gamma}u.
\end{equation}
Indeed, since $\rho(v)=1$ in $[0,1]$, \eqref{eq:lam01} is equivalent to $\lambda(0)=1$, which is well known \cite[Thm.~2.2]{dBL}. Let $\alpha=\alpha(H/2,y)>0$  be the solution to
\begin{equation}\label{eq:alphadef}
	\log (H/2) = \sum_{p \le y} \frac{\log p}{p^{\alpha}+1};
\end{equation}
the size of $\alpha$ is described in Lemma~\ref{lem:hild2}.
We are ready to state our theorem on $M(H,y)$.
\begin{thm}\label{thm:mt}
	Suppose $y\ge 2$, $H\ge 2$. Denote $u=\log H/\log y$. Fix $\varepsilon>0$. The following estimates hold.
	\begin{enumerate}
	\item Suppose $y \ge \exp((\log \log H)^{5/3+\varepsilon})$. As $H \to \infty$, we have
	\begin{equation}\label{eq:M2} M(H,y) \sim H \Prob_y \lambda(u).
\end{equation}
\item Suppose $H \ge y \ge (\log H)^{2+\varepsilon}$. As $\min\{u,\log y/\log \log H\} \to \infty$, we have	\begin{equation}\label{eq:M5}
	M(H,y)\sim \frac{e^{-\gamma}}{\xi(u)}\Prob_y \Psi(H,y).
\end{equation}
\item Suppose $H \ge y \ge (2+\varepsilon)\log H$ and $H \ge C_{\varepsilon}$. As $u \to \infty$, we have	\begin{equation}\label{eq:M4}
		M(H,y)\sim\frac{2\zeta(\alpha-1)}{\alpha-1} \prod_{p \le y} \left(1-\frac{2}{p(1+p^{-\alpha})} \right) \Psi_{\mu^2}(H,y).
	\end{equation}
\item Uniformly for $y \ge (\log H)^{1+\varepsilon}$ and $H \ge C_{\varepsilon}$, 
\begin{equation}\label{eq:last}
M(H,y) \asymp_{\varepsilon} \Prob_y \Psi_{\mu^2}(H,y)/\log(2+u).
\end{equation}
\end{enumerate}
\end{thm}
\begin{remark}
Estimate~\eqref{eq:M2} shows that in a wide range $M(H,y)$ is asymptotically of the form $H\Prob_y$ (the naive expectation) times a decreasing function of $\log H/\log y$, much like \eqref{eq:var primes}.
\end{remark}
\begin{remark}
Theorem~\ref{thm:mt}'s proof supplies rates of convergence for \eqref{eq:M2}--\eqref{eq:M4}, see \eqref{eq:MHstr}, \eqref{eq:M5form} and \eqref{eq:M5form2}.
\end{remark}
To make sense of Theorem~\ref{thm:mt}, we recall basic facts about $\Psi(x,y)$ and $\Psi_{\mu^2}(x,y)$.
For any $\varepsilon>0$, Hildebrand \cite{Hildebrand19862} proved that, in the range $x\ge y\ge \exp((\log \log x)^{5/3+\varepsilon})$, 
\begin{equation}\label{eq:psirho}
	\Psi(x,y)=x\rho\left(\frac{\log x}{\log y}\right)\bigg(1+O_{\varepsilon}\bigg( \frac{\log \left(\frac{\log x}{\log y}+1\right)}{\log y}\bigg)\bigg).
\end{equation}
Moreover, \eqref{eq:psirho} persists for $x\ge y \ge (\log x)^{2+\varepsilon}$ under RH \cite{Hildebrand1984}.
Ivi\'c and Tenenbaum proved \cite[Eq.~(2.12)]{Ivic1986} 
\begin{equation}\label{eq:ivict}
\Psi_{\mu^2}(x,y)= \frac{\Psi(x,y)}{\zeta(2)}\bigg(1+O_{\varepsilon}\bigg( \frac{\log \log (y+1)}{\log y}\bigg)\bigg)
\end{equation}
holds for $x\ge y \ge (\log x)^{2+\varepsilon}$ (cf.~Naimi \cite{Naimi1988}).
By \cite{deBruijn1966} and \cite{Granville1989},
\begin{equation}\label{eq:psimu2crude}
	\ \Psi(x,y),	\Psi_{\mu^2}(x,y) = x^{1-\frac{\log \log x}{\log y}+o(1)}
\end{equation}
hold when $y \ge \log x$. 
\subsection{Structure of paper}
The proof of Theorem~\ref{thm:ranges} relies crucially on the fundamental lemma of the combinatorial sieve. Particularly, we need its following corollary, applied to the set $A=\{\prod_{i=1}^{k}(n+h_i) : n \in I\}$ where $I$ is an interval and $h_i$ are nonnegative integers. We apply the fundamental lemma as appearing in Friedlander and Iwaniec  \cite[Thm.~6.12]{Friedlander2010}. Weaker versions go back to the works of Halberstam and Richert \cite{HR1,HR2}.
\begin{lem}\label{lem:sievecomb}
	Let $I$ be an interval of length $X$. Fix $k\ge 1$ and let $h_1,\ldots,h_k$ be nonnegative integers. For all $y,u \ge 1$ we have \[\sum_{n \in I\cap \ZZ} \prod_{i=1}^{k}\alpha_y(n+h_i) = X \prod_{p \le y} \left( 1- \frac{\nu_p(\mathbf{h})}{p}\right) \left( 1+O_k\left(u^{-u(1+o(1))}\right) \right) +  O_k\bigg( \Psi_{d_k\mu^2}(y^u,y)\bigg)\]
	where $\omega(d)$ is the number of prime factors of $d$, $\nu_p(\mathbf{h}) = \#\{ h_1 \bmod p, h_2 \bmod p ,\ldots, h_k \bmod p\}$, $d_k$ is the $k$-fold divisor function and we use the notation \eqref{eq:psif}. 
\end{lem}
In \S\ref{sec:sieve} we expand the square in $V(X,H,y)$, rearrange and apply Lemma~\ref{lem:sievecomb} with $k=1,2$. In this way, the main term $M(H,y)$ quickly appears and we are able to bound
\[ V(X,H,y) - M(H,y),\]
see Proposition~\ref{prop:sieveapproach}.

In \S\ref{sec:largey} we lay the ground for estimating $M(H,y)$ for large $y$ (namely $y \ge \exp(C(\log H)^{1/2}(\log \log H)^{1/2})$). This is done by working in physical space and is relatively elementary.

In \S\ref{sec:saddle}, the most technical section, we  lay the ground for studying $M(H,y)$ for smaller $y$. This relies on the saddle point method, originally developed in \cite{Hildebrand1986}.

In \S\ref{sec:final} we conclude the proofs of Theorems~\ref{thm:ranges} and \ref{thm:mt}. We use the estimates in \S\ref{sec:largey}-\S\ref{sec:saddle} to prove Theorem~\ref{thm:mt}. Once we have Theorem~\ref{thm:mt} available to us, we know the growth rate of $M(H,y)$ and can deduce from our bound on $V(X,H,y) - M(H,y)$ the asymptotic result $V(X,H,y) \sim M(H,y)$ in a certain range.

The proofs of Theorems~\ref{thm:ranges} and \ref{thm:mt} are based on ideas from Hall's work on the variance of squarefrees in short intervals \cite{Hall1982}. 

Lemma~\ref{lem:sievecomb} with $k \ge 3$ paves the way to studying higher moments of $\sum_{x<n\le x+H}\alpha_y(n)- H\Prob_y$, and we hope to study such moments in future work.
\subsection{Discussion}
\subsubsection{The case of \texorpdfstring{$X$}{X} being a primorial}
	As far as we are aware, asymptotics of  $V(X,H,y)$ were not studied previously, except in a very particular case which we now describe. If $X$ is exactly the primorial $q_y$, then $\alpha_y$ is $X$-periodic and may be considered as a function on $\ZZ/X\ZZ$, and 
\[ V(q_y,H,y) = \frac{1}{q_y} \sum_{i \in \ZZ/q_y\ZZ}\left( \sum_{n=1}^{H} \alpha_y(n+i) - H\frac{\phi(q_y)}{q_y}\right)^2.\]
This discrete sum was studied by Hausman and Shapiro \cite{Hausman1973} and by Montgomery and Vaughan \cite{Montgomery1986}. These authors studied, more generally, the quantity 
\[ V_q(H):=\frac{1}{q} \sum_{i \in \ZZ/q\ZZ} \left( \sum_{\substack{n=1\\ (n+i,q)=1}}^{H} 1 - H\frac{\phi(q)}{q}\right)^2\]
for squarefree $q$. Hausman and Shapiro expressed $V_q$ as
\begin{equation}\label{eq:vqh}
V_q(H) = \frac{\phi(q)^2}{q^2}\sum_{r \mid q} \prod_{\substack{p \mid q \\ p \nmid r}} \frac{p(p-2)}{(p-1)^2}r^2 \phi(r)^{-2} \left\{ \frac{H}{r}\right\} \left(1-\left\{ \frac{H}{r}\right\}\right)
\end{equation}
and Montgomery and Vaughan used this expression to show that
\[ H\frac{\phi(q)}{q} \left(1-\prod_{p \mid q,\, p>H}\left(1-\frac{1}{p}\right) +O\left(\frac{\phi(q)}{q}\right) \right) \le V_q(H)\le H \frac{\phi(q)}{q}.\]
In the case of $q=q_y$, these bounds become
\[ H\Prob_y \left(1- \prod_{H<p\le y}\left(1-\frac{1}{p}\right) + O\left( \frac{1}{\log y}\right)\right) \le V(q_y,H,y) \le H\Prob_y\]
by \eqref{eq:mertens3}. If $H=y^{o(1)}$ this leads to the asymptotic formula $V(q_y,H,y) \sim H\Prob_y$, in agreement with our results. However, this argument does not show that $V(q_y,H,y) \sim H\Prob_y$ fails to hold if $H\ge y^{\varepsilon}$ for some $\varepsilon>0$. Theorem~\ref{thm:mt} does show that, using the following identity, which follows easily from \eqref{eq:vqh}.
\begin{lem}\label{lem:primorial}
We have $V_{q_y}(H)= M(H,y)$.
\end{lem}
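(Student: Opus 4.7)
The plan is to substitute $q=q_y$ directly into the Hausman--Shapiro formula \eqref{eq:vqh} and to simplify the coefficient of $\{H/r\}(1-\{H/r\})$ prime by prime. Since every divisor $r$ of $q_y$ is squarefree with prime factors at most $y$, the product over $p \mid q_y$ splits into a product over $p \le y$ with $p \nmid r$ and a product over $p \mid r$. Writing $\phi(q_y)^2/q_y^2 = \prod_{p \le y}(1-1/p)^2 = \prod_{p \le y}(p-1)^2/p^2$ and distributing this over the two products, the contribution from a prime $p \nmid r$ becomes $\tfrac{(p-1)^2}{p^2}\cdot\tfrac{p(p-2)}{(p-1)^2} = 1-\tfrac{2}{p}$, while the contribution from a prime $p \mid r$ becomes $\tfrac{(p-1)^2}{p^2}\cdot \tfrac{p^2}{(p-1)^2} = 1$. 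Hence the coefficient of $\{H/r\}(1-\{H/r\})$ collapses to $\prod_{p \le y,\, p \nmid r}(1-2/p)$.

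The crucial observation is that this coefficient vanishes whenever $2 \nmid r$, because the factor $p=2$ then contributes $1 - 2/2 = 0$. Consequently only the divisors $r$ with $2 \mid r$ survive, and writing $r = 2n$ parametrises these by squarefree $n$ whose prime factors lie in $(2,y]$, i.e.\ by divisors $n$ of $q_y/2$. For such $n$,
\[
\prod_{\substack{p \le y\\ p \nmid 2n}}\left(1-\tfrac{2}{p}\right) = \prod_{2<p\le y}\left(1-\tfrac{2}{p}\right) \cdot \prod_{p \mid n}\frac{1}{1-2/p} = \prod_{2<p\le y}\left(1-\tfrac{2}{p}\right)\cdot g_y(n),
\]
recognising $g_y(n) = \prod_{p \mid n} p/(p-2)$ from \eqref{eq:gydef}.

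Summing over the surviving $r = 2n$ and matching with \eqref{eq:mt}, noting that $g_y$ is supported precisely on the divisors of $q_y/2$ (including $n=1$), yields
\[
V_{q_y}(H) = \prod_{2<p\le y}\left(1-\tfrac{2}{p}\right) \sum_{n\mid q_y/2} g_y(n) \left\{\tfrac{H}{2n}\right\}\left(1-\left\{\tfrac{H}{2n}\right\}\right) = M(H,y),
\]
which is the desired identity. There is no real obstacle here beyond bookkeeping; the one point that must be checked carefully is that the vanishing factor $1-2/2$ forces $2 \mid r$, so that the substitution $r = 2n$ is exactly what converts the Hausman--Shapiro sum into the form \eqref{eq:mt} with the $2n$ inside the fractional parts.
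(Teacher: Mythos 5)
Your proof is correct and takes essentially the same route as the paper: substitute $q=q_y$ into the Hausman--Shapiro formula, observe that the $p=2$ factor $1-2/2=0$ kills odd $r$, reparametrise $r=2n$, and identify the remaining coefficient with $\prod_{2<p\le y}(1-2/p)\,g_y(n)$. The only cosmetic difference is that you collapse the prime-by-prime coefficient to $\prod_{p\le y,\,p\nmid r}(1-2/p)$ \emph{before} noting the vanishing at $p=2$, whereas the paper spots the vanishing directly from the factor $p(p-2)/(p-1)^2$ and simplifies afterwards; the underlying calculation is identical.
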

\begin{proof}
Formula~\eqref{eq:vqh} with $q=q_y$ shows that
\[	V(q_y,H,y)= \prod_{p\le y} \left(1-\frac{1}{p}\right)^2 \sum_{\substack{r\ge 1\\ p \mid r \Rightarrow p \le y}}  \mu^2(r) r^2 \phi(r)^{-2} \prod_{\substack{p \le y\\ p \nmid r}}\frac{p(p-2)}{(p-1)^2} \left\{ \frac{H}{r}\right\} \left(1-\left\{ \frac{H}{r}\right\}\right).\]
If $r$ is odd, then its contribution is $0$ because of the term $p(p-2)/(p-1)^2$ vanishing at $p=2$. Thus we may assume $r$ is even. So we replace $r$ by $2r$, obtaining
\begin{align}\label{eq:vqh3}
	V(q_y,H,y)&=  \prod_{2<p\le y} \left(1-\frac{2}{p}\right) \sum_{\substack{r \text{ odd}\\ p \mid r \Rightarrow p \le y}}  \mu^2(r) r^2 \phi(r)^{-2} \prod_{\substack{p \le y\\ p \mid r}}\frac{(p-1)^2}{p(p-2)} \left\{ \frac{H}{2r}\right\} \left(1-\left\{ \frac{H}{2r}\right\}\right)\\
	&= \prod_{2<p\le y} \left(1-\frac{2}{p}\right) \sum_{r \ge 1}  g_y(r)\left\{ \frac{H}{2r}\right\} \left(1-\left\{ \frac{H}{2r}\right\}\right).
\end{align}
This coincides with $M(H,y)$, defined in \eqref{eq:mt}.
\end{proof}
\subsubsection{Fluctuations and limitations}\label{sec:limitations}
Let $X \ge y\ge 2$ and fix $\varepsilon>0$. Friedlander, Granville, Hildebrand and Maier \cite[Thm.~B1]{Friedlander1991} proved the following. If $y \ge (\log X)^{6/5+\varepsilon}$ and $v=\log X/\log y \ge C_{\varepsilon}$, then there exist 
\[\min\left\{ \frac{y}{2}, y^{1-\frac{2}{v+2}}\right\} \le y' \le y, \qquad X\left(1-\frac{1}{\log X}\right) \le X' \le X\]
so that
\begin{equation}\label{eq:osc}
\left| \frac{1}{X'}\sum_{1 \le n \le X'} \alpha_{y'}(n) - \Prob_{y'} \right| \ge \Prob_{y'} v^{-v(1+o(1))}.
\end{equation}
Let $H  \in [y,X]$. By Cauchy--Schwarz and Corollary~\ref{cor:expectation} with $u=\log H/\log y' \ge 1$ we have
\begin{equation}\label{eq:Hlarge} V(X',H,y') \ge H^2 \left( \frac{1}{X'}\sum_{1 \le n \le X'} \alpha_{y'}(n) - \Prob_{y'} + O\left( \frac{H^{1-\frac{\log \log H}{\log y'}+o(1)}}{X'}\right)\right)^2,
\end{equation}
where we made use of \eqref{eq:psimu2crude} (and our assumptions on $y$ and $v$).
If the error term in \eqref{eq:Hlarge} is negligible compared to $\Prob_{y'}v^{-v(1+o(1))}$ (e.g.~if $(1-\log \log H/\log y') \log H \le (1-\varepsilon)(1- \log \log X/\log y)\log X$), then it follows by \eqref{eq:osc} that	
\[ V(X',H,y') \gg H^2 \Prob_{y'}^2 v^{-2v(1+o(1))}\]
for some $X'$ and $y'$ close to the original $X$ and $y$. By \eqref{eq:psimu2crude} this lower bound exceeds the main term $\Prob_{y'} \Psi_{\mu^2}(H,y')^{1+o(1)}$ if 
\[	\left(1+ \frac{\log \log H}{\log y'}\right)\frac{\log H}{\log \log H} \ge (2+\varepsilon)\frac{\log X}{\log y'}.\]
This shows that in general we cannot replace the constant $1-\varepsilon$ in \eqref{eq:sievecond} by $2+\varepsilon$. We conjecture that we are off by a factor of $2$, that is, that $2-\varepsilon$ is the best possible constant.
\begin{conj}\label{conj:pred}
Fix $\varepsilon\in (0,1)$. Suppose the parameters $y,H\le X$ tend to $\infty$ in such a way that  $y \ge (2+\varepsilon)\log H$ holds. Set $a=\log \log H/\log y \in (0,  1)$. If
\[	(1+ a)\frac{\log H}{\log \log H} \le (2-\varepsilon)\frac{\log X}{\log y},\]
then 
\[ V(X,H,y) \sim M(H,y).\]
\end{conj}
If one replaces the main term $H\Prob_y$ appearing in the definition of $V(X,H,y)$ (which is currently independent of $x$, the starting point of the interval $(x,x+H]$) by a more nuanced approximation for $\sum_{n\in (x,x+H]} \alpha_y(n)$ we expect our asymptotic result to hold in a range going beyond Conjecture~\ref{conj:pred}, but are not able to show this. Two examples of less naive main terms are $H\sum_{n \le X} \alpha_y(n)/X$, and de Bruijn's approximation \cite{DeBruijn1950} to $\sum_{n \le x+H} \alpha_y(n)-\sum_{n \le x} \alpha_y(n)$, namely 
\[ (x+H)\Prob_y e^{\gamma}\log y \int_{0}^{\infty}\omega\left(\frac{\log (x+H)}{\log y}-v\right)y^{-v}{\rm d}v-x\Prob_y e^{\gamma}\log y \int_{0}^{\infty}\omega\left(\frac{\log x}{\log y}-v\right)y^{-v}{\rm d}v\] 
where $\omega$ is the Buchstab function.

Granville and Soundararajan studied fluctuations of $\alpha_y$ in short intervals \cite[Cor.~1.2]{Granville2007}. Concretely, given $y\ge C$ and $y^{c_1} \ge u \ge C$ they proved the existence of intervals $I_{\pm}$ of length at least $y^{2c_1u}$ such that $\sum_{n \in I_+} \alpha_y(n)-H\Prob_y \ge H\Prob_y u^{-uc_2}$ and $\sum_{n \in I_+} \alpha_y(n)-H\Prob_y \le -H\Prob_y u^{-uc_2}$. Here $C$, $c_1$ and $c_2$ are absolute constants.
\subsubsection{Duality}
Our results show a duality between rough numbers and smooth numbers. Such a connection is to be expected, because we may write $\alpha_y$ as $1*\beta_y$ where $\beta_y$ is supported on $y$-smooth numbers:
\[ \beta_y(n) = \mu(n) \mathbf{1}_{n \text{ is }y\text{-smooth}}.\]
As a consequence, $\sum_{n \le x} \alpha_y(n) = \sum_{d \text{ is }y\text{-smooth}} \mu(d) \lfloor x/d \rfloor$ and this may be used to obtain \cite[Thm.~III.6.2]{Tenenbaum2015}
\[\sum_{n \le x} \alpha_y(n) = x\Prob_y + O(\Psi(x,y)).\]
See \cite[Ch.~III.6]{Tenenbaum2015} for further examples of this duality.
\subsection{Conventions}
Throughout, $C$ and $c$ are absolute positive constants that may change between occurrences. Similarly, $C_{\varepsilon}$ and $c_{\varepsilon}$ are positive functions of $\varepsilon$  that may change between occurrences.
\section{Application of sieve theory}\label{sec:sieve}
The following corollary and proposition are consequences of Lemma~\ref{lem:sievecomb}.
\begin{cor}\label{cor:expectation}
	Let $1 \le H\le X$ be integers. Given an integer $y \ge 2$ and a real parameter $u \ge 1$ we have \begin{equation}\label{eq:expectation}
		\frac{1}{X} \int_{0}^{X} \bigg( \sum_{x<n\le x+ H} \alpha_y(n)\bigg){\rm d}x = \frac{H}{X} \sum_{n=1}^{X} \alpha_y(n) + O\left( \frac{H}{X} (H \Prob_y u^{-u(1+o(1))} + \Psi_{\mu^2}(y^u,y) )\right).
	\end{equation}
\end{cor}
\begin{proof}
The left-hand side is
\[ \frac{1}{X}\sum_{j=1}^{X}\sum_{i=0}^{H-1}\alpha_{y}(j+i) = \frac{H}{X}\sum_{n=1}^{X} \alpha_y(n) - \frac{1}{X}\sum_{j=1}^{H} \alpha_y(j)(H-j) + \frac{1}{X}\sum_{j=X+1}^{X+H} \alpha_y(j) (X+H-j).\]
The error term arises from bounding the two $j$-sums. By summation by parts they can be written as
\[-\frac{1}{X}\sum_{j=1}^{H} \alpha_y(j)(H-j) + \frac{1}{X}\sum_{j=X+1}^{X+H} \alpha_y(j) (X+H-j)
=\frac{1}{X}\sum_{j=1}^{H-1} \bigg(\sum_{1 \le n \le j} \alpha_y(n+X) -  \sum_{1 \le n \le j} \alpha_y(n)\bigg). \]
Applying Lemma~\ref{lem:sievecomb} with $k=1$, they contribute
\[ \ll \frac{H}{X}\left( H\Prob_y u^{-u(1+o(1))} + \Psi_{\mu^2}(y^u,y)\right)\]
and the claim is established.
\end{proof}

	\begin{proposition}\label{prop:sieveapproach}
Let $1 \le H \le X$ be integers. Given an integer $y\ge 2$ and a real parameter $u \ge 1$ we have
\[	V(X,H,y) = M(H,y)+ O\left( H^2\left( \Prob_y  u^{-u(1+o(1))}+\frac{ \Psiomega(y^u,y)}{X}\right)\right).\]
	\end{proposition}
	\begin{proof}
	Expanding the square in $V(X,H,y)$ we have
		\[ V(X,H,y) = V_1 + V_2 + V_3 \]
		where
	\[ V_1 = \frac{1}{X} \int_{0}^{X} \bigg( \sum_{x<n\le x+H} \alpha_{y}(n)\bigg)^2 {\rm d}x, \qquad V_2 = H^2 \Prob^2_y,\qquad V_3 = - 2H\Prob_y \frac{1}{X} \int_{0}^{X} \bigg( \sum_{x<n \le x+H}\alpha_{y}(n)\bigg){\rm d}x.\]
We start by studying $V_3$. By Corollary~\ref{cor:expectation} and Lemma~\ref{lem:sievecomb} with $k=1$,
\begin{align}\label{eq:diag}
\frac{1}{X} \int_{0}^{X} \bigg( \sum_{x<n \le x+H}\alpha_{y}(n)\bigg){\rm d}x &= \frac{H}{X}\sum_{n=1}^{X} \alpha_y(n) + O\left( \frac{H}{X} (H\Prob_y u^{-u(1+o(1))}+\Psi_{\mu^2}(y^u,y))\right)\\
&=  H \Prob_y \left( 1 + O\left( u^{-u(1+o(1))}\right) \right) + O\left( \frac{H}{X}\Psi_{\mu^2}(y^u,y)\right).
\end{align}
		Hence
		\[ V_3 = -2H^2\Prob_y^2 + O\left( H^2 \Prob_y^2 u^{-u(1+o(1))}+\frac{H^2 \Prob_y \Psi_{\mu^2}(y^u,y)}{X}\right).\]
		We turn to $V_1$. We separate it into diagonal and off-diagonal parts:
\[ V_1  = \frac{1}{X} \sum_{n=1}^{X}\left(\sum_{i=0}^{H-1} \alpha_y(n+i)\right)^2 = \sum_{i=0}^{H-1}\frac{1}{X} \sum_{n=1}^{X} \alpha_y(n+i) + 2\sum_{0 \le i<j \le H-1} \frac{1}{X} \sum_{n=1}^{X} \alpha_y(n+i) \alpha_y(n+j) .\]
		The diagonal part was already estimated in \eqref{eq:diag}. For the off-diagonal part we have
		\begin{equation}\label{eq:nondiag}
 \sum_{0 \le i<j \le H-1} \frac{1}{X} \sum_{n=1}^{X} \alpha_y(n+i) \alpha_y(n+j) = \sum_{1 \le k \le H-1}  \frac{H-k}{X}\sum_{n=1}^{X} \alpha_y(n) \alpha_y(n+k) + E ,
		\end{equation}
	where 
	\begin{align*}
	E &= \frac{1}{X}\sum_{1 \le k \le H-1} \left(\sum_{n=1}^{H-k} \alpha_y(n)\alpha_y(n+k) (n-(H-k)) + \sum_{n=X+1}^{X+H-k} \alpha_y(n) \alpha_y(n+k) (X+H-k-n)\right)\\
	&=\frac{1}{X}\sum_{1 \le k \le H-1} \sum_{j=1}^{H-k-1}\left(\sum_{1 \le n \le j} \alpha_y(n+X)\alpha_y(n+X+k)-\sum_{1 \le n \le j}\alpha_y(n)\alpha_y(n+k)\right) \\
	& \ll \frac{H^2}{X} \left( H \Prob_y u^{-u(1+o(1))} + \Psiomega(y^u,y)\right)
\end{align*}
by partial summation and Lemma~\ref{lem:sievecomb} with $k=2$, where we have used the fact that $\prod_{p\le y}(1-\nu_p(0,k)/p) \le \Prob_y$.
The lemma is also used to estimate the main term in \eqref{eq:nondiag}, giving
		\begin{multline*}  \sum_{0 \le i<j \le H-1} \frac{1}{X} \sum_{n=1}^{X} \alpha_y(n+i) \alpha_y(n+j) =\sum_{\substack{1 \le k \le H-1\\ 2 \mid k}} (H-k) \prod_{p \le y} \left(1-\frac{\nu_p(0,k)}{p}\right) \\
			+ O\left(H^2\Prob_y u^{-u(1+o(1))} + \frac{H^2 \Psiomega(y^u,y)}{X}\right),
		\end{multline*}
		where we make use of the fact that $\prod_{p \le y}(1-\nu_p(0,k)/p)$ vanishes if $k$ is odd because of the $p=2$ factor. In summary,
		\[ V(X,H,y) = U+O\left( H^2\Prob_y  u^{-u(1+o(1))}+ \frac{H^2\Psiomega(y^u,y)}{X}\right)\] 
		for 
		\[ U = H\Prob_y - H^2\Prob^2_y +2\sum_{1 \le k < \frac{H}{2}} \left(\frac{H}{2}-k\right) \prod_{2<p \le y} \left(1-\frac{\nu_p(0,k)}{p}\right),\] 
		where we have replaced $k$ with $2k$.
		Next we estimate the sum appearing in $U$. Letting \[ C_y := \prod_{2<p \le y} \left(1-\frac{2}{p}\right)\]
		we have $\prod_{2<p\le y} (1-\nu_p(0,k)/p) = C_y f_y(k)$ for a multiplicative function $f_y$ defined by \[ f_y(k) = \prod_{\substack{2<p \le y\\  p \mid k}} \frac{1-\frac{1}{p}}{1-\frac{2}{p}}.\]
		In this notation,
		\begin{equation}\label{eq:usingf}
		\sum_{1 \le k < \frac{H}{2}} \left(\frac{H}{2}-k\right) \prod_{2<p\le y} \left(1-\frac{\nu_p(0,k)}{p}\right) = C_y \sum_{1 \le k < \frac{H}{2}} \left(\frac{H}{2}-k\right)f_y(k).
	\end{equation}
		The function $f_y$ is related to $g_y$ defined in \eqref{eq:gydef} via 
	\[		f_y(n) = \sum_{d \mid n} \frac{g_y(d)}{d}.\]
Hence the sum in the right-hand side of \eqref{eq:usingf} is
		\begin{equation}\label{eq:APFloor}
		\sum_{d<\frac{H}{2}} \frac{g_y(d)}{d}  \sum_{1 \le i <\frac{H}{2d}} \left(\frac{H}{2}-di\right)= \sum_{d} g_y(d) \left( \frac{H}{2d}\left\lfloor \frac{H}{2d} \right\rfloor -\frac{1}{2} \left\lfloor \frac{H}{2d}\right\rfloor \left(\left\lfloor \frac{H}{2d} \right\rfloor +1\right)\right). 
	\end{equation}
	For any $x>0$,
\begin{equation}\label{eq:fraciden}
	x\left\lfloor x\right\rfloor -\frac{1}{2} \left\lfloor x\right\rfloor \left(\left\lfloor x\right\rfloor +1\right) = \frac{x^2}{2} - \frac{x}{2} + \frac{1}{2}\left\{ x\right\}\left(1-\left\{ x\right\}\right).
\end{equation}
Applying \eqref{eq:fraciden} with $x=H/(2d)$ it follows that the right-hand side of \eqref{eq:APFloor} is
\begin{equation}\label{eq:physical}
\frac{1}{2}\sum_{d} g_y(d) \left\{ \frac{H}{2d}\right\}\left(1-\left\{ \frac{H}{2d}\right\}\right) + \frac{H^2}{8}\sum_{d} \frac{g_y(d)}{d^2} -  \frac{H}{4}\sum_{d} \frac{g_y(d)}{d}.
\end{equation}
After multiplying by $2C_y$, the last two $d$-sums in \eqref{eq:physical} cancel with $H^2 \Prob_y^2$ and $H\Prob_y$, and we find that 
		\begin{equation}\label{eq:asneeded}
		V(X,H,y) = C_y \sum_{d} g_y(d)\left\{ \frac{H}{2d}\right\}\left(1-\left\{ \frac{H}{2d}\right\}\right)+ O\left( H^2\Prob_y  u^{-u(1+o(1))}+\frac{H^2 \Psiomega(y^u,y)}{X}\right)
	\end{equation}
holds as needed. 
\end{proof}
\section{Main term estimates: large \texorpdfstring{$y$}{y}}\label{sec:largey}
\begin{lem}\label{lem:bnds}
	Given $ x\ge 2$, $y \ge 2$ we have
	\begin{align*}
		\sum_{r \ge 1}  g_y(r)\left\{ \frac{x}{r}\right\} \left(1-\left\{ \frac{x}{r}\right\}\right) &\le \sum_{r \ge 1} g_y(r) \min\left\{ \frac{x}{r}, 1\right\} ,\\
		\sum_{r \ge 1}  g_y(r)\left\{ \frac{x}{r}\right\} \left(1-\left\{ \frac{x}{r}\right\}\right) &\ge x\sum_{r > x}  \frac{g_y(r)}{r}\left(1-\frac{x}{r}\right).
	\end{align*}
	In particular,
	\begin{equation}\label{eq:gytail}
		\sum_{r \ge 1}  g_y(r)\left\{ \frac{x}{r}\right\} \left(1-\left\{ \frac{x}{r}\right\}\right) = x\sum_{r > x} \frac{g_y(r)}{r} + O\left( \sum_{r \le x} g_y(r) + x^2\sum_{r > x} \frac{g_y(r)}{r^2} \right).
	\end{equation}
\end{lem}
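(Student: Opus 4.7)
The proof is essentially a pointwise estimate on $\{t\}(1-\{t\})$ applied term by term with $t=x/r$, together with splitting the summation at $r=x$.

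For the upper bound, observe that for any real $t\ge 0$ we have $\{t\}(1-\{t\})\le \min\{t,1\}$. Indeed, if $t<1$ then $\{t\}=t$ and $t(1-t)\le t=\min\{t,1\}$, while if $t\ge 1$ then $\{t\}(1-\{t\})\le 1/4\le 1=\min\{t,1\}$. Applying this termwise with $t=x/r$ and using $g_y(r)\ge 0$ yields the first inequality.

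For the lower bound, all terms are non-negative, so we may discard those with $r\le x$. For $r>x$ we have $x/r<1$, hence $\{x/r\}=x/r$ and $\{x/r\}(1-\{x/r\})=(x/r)(1-x/r)$, which gives the stated lower bound upon factoring $x$.

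Finally, \eqref{eq:gytail} follows by combining the two sides. The upper bound decomposes as
\[
\sum_{r\ge 1} g_y(r)\min\bigl\{x/r,1\bigr\}=\sum_{r\le x} g_y(r)+x\sum_{r>x}\frac{g_y(r)}{r},
\]
while the lower bound equals
\[
x\sum_{r>x}\frac{g_y(r)}{r}-x^2\sum_{r>x}\frac{g_y(r)}{r^2}.
\]
Both expressions agree with $x\sum_{r>x} g_y(r)/r$ up to a quantity bounded by $\sum_{r\le x} g_y(r)+x^2\sum_{r>x} g_y(r)/r^2$, which is exactly the claimed error term. There is no genuine obstacle here: the argument is a direct pointwise inequality plus a one-step splitting of the tail, and the non-negativity of $g_y$ lets all manipulations pass term by term.
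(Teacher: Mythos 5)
Your proof is correct and follows essentially the same approach as the paper: both rest on the pointwise bounds $\{t\}(1-\{t\})\le\min\{t,1\}$ and $\{t\}(1-\{t\})=t(1-t)$ for $0\le t\le 1$, applied termwise with $t=x/r$ and combined via a split of the sum at $r=x$. The only cosmetic difference is that you invoke the exact equality for $r>x$ rather than the one-sided inequality, but the resulting argument is identical.
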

\begin{proof}
	This follows from $\{t\}(1-\{t\}) \le \min\{t,1\}$ for $t \ge 0$ and $\{t\}(1-\{t\})\ge t(1-t)$ for $0 \le t < 1$.
\end{proof}
\begin{lem}\label{lem:tailbnds}
Let $x \ge 2$, $y \ge 2$. Set $u=\log x/ \log y$. Fix $\varepsilon>0$. If $u \le \exp((\log y)^{3/5-\varepsilon})$, then
	\[ \sum_{r \le x}  g_y(r) \ll_{\varepsilon} x \rho(u) \qquad \text{and} \qquad  \sum_{r > x}  \frac{g_y(r)}{r^2} \ll_{\varepsilon} \frac{\rho(u)}{x} \]
	where $\rho$ is the Dickman function.
\end{lem}
\begin{proof}
	Let us define a multiplicative function $g$, supported on odd squarefrees, by $g(p)=p/(p-2)$ for all primes greater than $2$. In this way, $0 \le g_y \le g$. We have  $ \sum_{r \le x} g_y(r) \le \sum_{r \le x} g(r) \ll x$ by \cite[Cor.~2.5]{Montgomery2007}. In particular,
	\[ \sum_{r > x} \frac{g_y(r)}{r^2} \le \sum_{r > x} \frac{g(r)}{r^2} \ll \sum_{2^k>x/2} \frac{\sum_{r<2^k}g(r)}{4^k} \ll \frac{1}{x}.\]
	This proves the result as long as $y \ge x$, since $\rho(u)=1$ when $u \in [0,1]$. From now on we assume $1 \le u \le \exp((\log y)^{3/5-\varepsilon})$. In this case, the required bound for $\sum_{r \le x} g_y(r)$ is a special case of Corollaire 2.3 of Tenenbaum and Wu \cite{Tenenbaum2003}, and it remains to bound $\sum_{r>x} g_y(r)/r^2$. We again use dyadic dissection. Let $k_0$ be the largest $k$ satisfying
	\[\frac{\log(2^k)}{\log y} \le \exp((\log y)^{3/5-\varepsilon/2}).\]
	By the bound for $\sum_{r \le x} g_y(r)$ (with $\varepsilon/2$ in place of $\varepsilon$),
	\begin{equation}\label{eq:dyd} \sum_{r > x} \frac{g_y(r)}{r^2} \ll \sum_{2^k>x/2} \frac{\sum_{r<2^k}g_y(r)}{4^k} \ll \sum_{\substack{2^k>x/2\\ k < k_0}} \frac{2^k \rho\left( \frac{\log (2^k)}{\log y}\right)}{4^k} + \sum_{ k \ge k_0} \frac{(\log y)^2 \Psi(2^k,y)}{4^k},
	\end{equation}
	where we have used the pointwise bound $g_y \ll (\log y)^2$.
	As $\rho$ is nonincreasing, the first sum in the right-hand side of \eqref{eq:dyd} is
	\[ \ll \frac{\rho\left( u- \frac{\log 2}{\log y}\right)}{x} \ll \frac{\rho(u)}{x},\]
	where the last inequality follows from $\rho(u-t) \ll \rho(u)\exp(O(t \log u))$ for $t \in [0,1]$ \cite[Lem.~1]{Hildebrand1984}. We have $\Psi(2x,y)\le 2\Psi(x,y)$ for $x>y\ge C$ \cite{Hildebrand1985}, and $\Psi(x,y) \ll_{\varepsilon} x\rho(u)$ for $u \le \exp((\log y)^{3/5-\varepsilon})$ by \eqref{eq:psirho}. It follows that the second sum in the right-hand side of \eqref{eq:dyd} is
	\[ \ll \frac{(\log y)^2 \Psi(2^{k_0},y)}{4^{k_0}} \ll_{\varepsilon} \frac{(\log y)^2\rho\left( \frac{\log(2^{k_0})}{\log y}\right)}{2^{k_0}} \ll \frac{(\log y)^2 \rho(u)}{2^{k_0}} \ll_{\varepsilon} \frac{\rho(u)}{x}\]
	as needed.
\end{proof}
\begin{lem}\label{lem:tailasymptotics}
	Let $x \ge 2$, $y\ge 2$. Set $u=\log x / \log y$. If $y\ge \exp(C(\log x)^{1/2}(\log \log x)^{1/2})$, we have
	\begin{equation}\label{eq:ygreater}
		\sum_{r> x} \frac{g_y(r)}{r} = \prod_{2 <p \le y} \left(1 +\frac{1}{p-2}\right) \lambda(u)\left(1+O\left( \frac{(u+1)^2\log(u+2)}{\log x}\right) \right)
	\end{equation}
where $\lambda$ was defined in \eqref{eq:deflam}. 
\end{lem}
\begin{proof}
	Let us first consider $x \ge y \ge \exp(C(\log x)^{1/2}(\log \log x)^{1/2})$. Let $g$ be the multiplicative function supported on odd squarefrees and defined by $g(p)=p/(p-2)$ for all primes greater than $2$. Then the sum in question coincides with $\sum_{r>x,\, r\text{ is }y\text{-smooth}}g(r)/r$, a sum which was studied (for general $f$ in place of $g$) in \cite{Tenenbaum2008}. In the considered range, \eqref{eq:ygreater} is a special case of Th\'eor\`eme 3.2 of Tenenbaum and Wu \cite{Tenenbaum2008}. It remains to consider $y > x$. In this case we  may write the sum as
	\begin{align*}
		\sum_{r >x } \frac{g_y(r)}{r} &=\sum_{r \ge 1} \frac{g_y(r)}{r} - \sum_{r \le x} \frac{g_y(r)}{r} = \prod_{2<p \le y} \left(1+\frac{1}{p-2}\right)- \sum_{r\le x} \frac{g_x(r)}{r}\\
		&=\prod_{2<p \le y} \left(1+\frac{1}{p-2}\right)-\prod_{2<p \le x} \left(1+\frac{1}{p-2}\right)+\sum_{r >x } \frac{g_x(r)}{r}.
	\end{align*}
	Applying \eqref{eq:ygreater} with $y=x$ we find
	\[ \sum_{r>x} \frac{g_x(r)}{r} = \prod_{2 < p \le x} \left(1+\frac{1}{p-2}\right)\lambda(1) \left(1+O\left(\frac{1}{\log x}\right)\right).\]
	The result now follows by using $\prod_{x < p \le y} (1+1/(p-2)) = u^{-1}(1+O(1/\log x))$ (consequence of \eqref{eq:mertens3}) and recalling \eqref{eq:lam01}.
\end{proof}
The range of Lemma \ref{lem:tailasymptotics} can be improved to $y \ge \exp((\log \log x)^{5/3+\varepsilon})$ by invoking Th\'eor\`eme 3.3 in Tenenbaum and Wu \cite{Tenenbaum2008}.
\section{Main term estimates: small \texorpdfstring{$y$}{y}}\label{sec:saddle}
\subsection{Hall's identities}
We write $s \in \CC$ as $s=\sigma + it$. 
\begin{lem}\cite[Cors.~10.3--10.4]{Montgomery2007}\label{lem:functional}
	We have, for all $s \neq 1$, $\zeta(s) = \zeta(1-s)\chi(s)$ where 
	\[ \chi(s):=\pi^{s-\frac{1}{2}}\frac{\Gamma\left(\frac{1-s}{2}\right)}{\Gamma\left(\frac{s}{2}\right)}=\frac{(2\pi)^s}{\pi}\Gamma(1-s)\sin \frac{\pi s}{2}.\]
\end{lem}
\begin{lem}\cite[Eq.~(4.12.3)]{Titchmarsh}\cite[Cor.~10.5]{Montgomery2007}\label{lem:chisize}
	Fix $A>0$. For $|\sigma| \le A$ and $|t| \ge 1$ we have
	\[ \chi(s)= \left(\frac{2\pi}{|t|}\right)^{s-\frac{1}{2}}e^{i\left(t\pm\frac{\pi}{4}\right)}\left(1+O_A\left(|t|^{-1}\right)\right), \]
	where the sign in the exponent of $e$ is `$+$' if $t>0$, and is `$-$' if $t<0$. In particular $|\chi(s)| \asymp_A (|t|+4)^{\frac{1}{2}-\sigma}$.
\end{lem}
\begin{lem}\cite[Cor.~1.17]{Montgomery2007}\label{lem:convex}
	We have $\zeta(s) \ll (1+|t|^{1-\sigma})\min\{1/|\sigma-1|,\log |t|\}$ for $|t|\ge 2$ and $1/2 \le \sigma \le 2$. We have $\zeta(s) \ll  1/|\sigma-1|$ if $|t|\le 2$ and $1/2 \le \sigma \le 2$.
\end{lem}
Lemma~\ref{lem:perron rep} and Proposition~\ref{prop:tailI} below are implicit in Hall's work on squarefrees \cite[pp.~9--12]{Hall1982}. 
\begin{lem}[Hall]\label{lem:perron rep}
	Let $x>0$. For any $r \in (1/2,1)$,
	\begin{equation}\label{eq:abs}  \frac{1}{2\pi i} \int_{(r)} \frac{ \zeta(s-1)x^s}{s(s-1)}{\rm d}s = \frac{1}{2}\{ x\} (1-\{x\})
	\end{equation}
	and the integral converges absolutely. For any $r \in (0,1/2]$, \eqref{eq:abs} continues to hold if the integral is interpreted in principal value sense.
\end{lem}
\begin{proof}
	By Perron's formula, we have for all $r>1$
	\[ \frac{1}{2\pi i} \int_{(r)}\zeta(s)\frac{x^s}{s}{\rm d}s = \begin{cases} \sum_{n \le x} 1 & \text{if }x \notin \NN,\\ \sum_{n \le x-1} 1 + \frac{1}{2} &\text{if }x \in \NN\end{cases} = \lfloor x\rfloor - \mathbf{1}_{x \in \NN}\frac{1}{2}.  \]
	Similarly, for all $r>2$,
	\[ \frac{1}{2\pi i} \int_{(r)}\zeta(s-1)\frac{x^s}{s}{\rm d}s = \begin{cases} \sum_{n \le x} n & \text{if }x \notin \NN,\\ \sum_{n \le x-1} n + \frac{x}{2} &\text{if }x \in \NN\end{cases} = \sum_{n \le x} n - \mathbf{1}_{x \in \NN}  \frac{x}{2}.  \]
	Changing $s$ to $s-1$ in the first integral and taking a linear combination of these two integrals, we find that for all $r >2$ we have
	\begin{equation}\label{eq:zetassminus}
		\begin{split} \frac{1}{2\pi i} \int_{(r)}\frac{ \zeta(s-1)x^s}{s(s-1)}{\rm d}s &= \frac{1}{2\pi i} \int_{(r)}\frac{ \zeta(s-1) x^s}{s-1}{\rm d}s - \frac{1}{2\pi i} \int_{(r)}\frac{ \zeta(s-1) x	^s}{s}{\rm d}s\\
			&= x \left( \lfloor x\rfloor - \mathbf{1}_{x \in \NN}\frac{1}{2} \right) - \bigg( \sum_{n \le x} n - \mathbf{1}_{x \in \NN}  \frac{x}{2} \bigg)=  x\lfloor x \rfloor -\frac{1}{2}\lfloor x \rfloor (\lfloor x \rfloor +1).
		\end{split}
	\end{equation}
	We now manipulate the left-hand side of \eqref{eq:zetassminus}.
	In $\Re s > 0$, the integrand $ \zeta(s-1) x^s/(s(s-1)) $ has a simple pole at $s=1$ with residue $\zeta(0)x = -x/2$ and a simple pole at $s=2$ with residue $x^2/2$. Moreover, the estimates for $\zeta$ given in Lemmas~\ref{lem:functional}--\ref{lem:convex} and the presence of $s(s-1)$ in the integrand allow us to shift the contour of integration to the left of the pole at $s=2$, obtaining from \eqref{eq:zetassminus}
	\[ \frac{1}{2\pi i} \int_{(r)} \frac{ \zeta(s-1)x^s}{s(s-1)}{\rm d}s =x\lfloor x \rfloor -\frac{1}{2}\lfloor x \rfloor (\lfloor x \rfloor +1) -\frac{x^2}{2} \]
	for all $r \in (1,2)$, and shift further more to the left of the pole at $s=1$, obtaining, by \eqref{eq:fraciden},
	\[ \frac{1}{2\pi i} \int_{(r)} \frac{ \zeta(s-1)x^s}{s(s-1)}{\rm d}s = x\lfloor x \rfloor -\frac{1}{2}\lfloor x \rfloor (\lfloor x \rfloor +1) -\frac{x^2}{2} + \frac{x}{2}=\frac{1}{2}\{x\}(1-\{x\}) \]
	for all $r \in (1/2,1)$, as the integral converges absolutely for $r$ in that range, since $|\zeta(s-1)/(s(s-1))| \ll_r (|t|+4)^{-1/2-\sigma}$ for $|t| \ge 1$ and $\Re s =r$ by Lemmas~\ref{lem:functional}--\ref{lem:convex}. This concludes the first part of the lemma.
	
	We now consider the second part. Let $r \in (0,1/2]$. Let $T\ge 2$. We apply Cauchy's integral theorem to the rectangle with vertices $r-iT$, $r+iT$, $3/4+iT$, $3/4-iT$. Then
	\[\int_{r-iT}^{r+iT} \frac{ \zeta(s-1)x^s}{s(s-1)}{\rm d}s = \int_{3/4-iT}^{3/4+iT} \frac{ \zeta(s-1)x^s}{s(s-1)}{\rm d}s + \int_{r-iT}^{3/4-iT} \frac{ \zeta(s-1)x^s}{s(s-1)}{\rm d}s +  \int_{3/4+iT}^{r+iT} \frac{ \zeta(s-1)x^s}{s(s-1)}{\rm d}s.\]
	Letting $T\to \infty$ and invoking the first part with $r=3/4$, it suffices to show that the integrals
	\[  \int_{r-iT}^{3/4-iT} \frac{ \zeta(s-1)x^s}{s(s-1)}{\rm d}s\qquad \text{and}\qquad \int_{3/4+iT}^{r+iT} \frac{ \zeta(s-1)x^s}{s(s-1)}{\rm d}s\]
	go to zero with $T$. Both integrals are 
	\[ \ll \frac{1}{T^2} \int_{r}^{3/4} x^{\sigma} |\zeta(\sigma-1\pm iT)| {\rm d}\sigma \ll \frac{1}{T^2} \int_{r}^{3/4} x^{\sigma} T^{\frac{3}{2}-\sigma}  {\rm d}\sigma\]
	by Lemmas~\ref{lem:functional}-\ref{lem:chisize}. If $T \gg x^2$, this is $\ll x^r/T^{r+1/2}$, which goes to $0$ as $T \to \infty$.
\end{proof}
The following is a truncated version of Lemma~\ref{lem:perron rep}.
\begin{proposition}[Hall]\label{prop:tailI}
	Uniformly for $x>0$, $r \in (0,1)$ and $T \ge 1$ we have
	\begin{equation}\label{eq:truncatedhall}
		\int_{r+iT}^{r+i\infty} \frac{\zeta(s-1)x^s}{s(s-1)}{\rm d}s  \ll \frac{x^r}{T^{\frac{1}{2}+r}(1-r)} +a_r(x,T)
	\end{equation}
	where
	\begin{equation}\label{eq:ardef}
		a_r(x,T) := (x/T)^{1/2+r} \sum_{n \le T/x} n^{r-3/2} + \min\{1,x/T\} \ll \min\{1,(x/T)^r\}.
	\end{equation}
	An identical bound holds if we integrate from $r-i\infty$ to $r-iT$.
\end{proposition}
\begin{proof}
	We apply the functional equation, stated in Lemma~\ref{lem:functional}, to the integrand in \eqref{eq:truncatedhall}, which replaces the integral in question with 
	\begin{equation}\label{eq:considered}
		\int_{r+iT}^{r+i\infty} \frac{\zeta(2-s)\chi(s-1)x^s}{s(s-1)}{\rm d}s.
	\end{equation}
	By Lemma~\ref{lem:chisize},
	\begin{equation}\label{eq:chisizeapp} \frac{\chi(s-1)}{s(s-1)}=-t^{-2} \left(\frac{2\pi}{t}\right)^{s-\frac{3}{2}}e^{i\left(t+\frac{\pi}{4}\right)}\left(1+O\left(t^{-1}\right)\right)
	\end{equation}
	uniformly for $t \ge 1$ and $\sigma \in [0,1]$. The contribution of the error term in \eqref{eq:chisizeapp} to \eqref{eq:considered} is, by the triangle inequality, at most
	\[ \ll x^{r} \int_{T}^{\infty} |\zeta(2-r-it)| t^{-\frac{3}{2}-r}{\rm d}t \ll x^r \zeta(2-r) T^{-\frac{1}{2}-r},\]
	which is acceptable. The contribution of the main term in \eqref{eq:chisizeapp} to \eqref{eq:considered} can be written as $-e^{i\pi/4}(2\pi)^{r-3/2}i$ (a constant) times
	\begin{equation}\label{eq:MTint}
		x^r \int_{T}^{\infty}
		\zeta(2-r-it)   \left(\frac{2\pi ex}{t}\right)^{it} t^{-\frac{1}{2}-r} {\rm d}t .
	\end{equation}
	The rest of the proof bounds \eqref{eq:MTint}. We introduce
	\[ J(t):=\int_{T}^{t} \zeta(2-r-iu)\left( \frac{2\pi ex}{u}\right)^{iu} {\rm d}u.\]
	We shall estimate $J$. Using the definition of $\zeta$ and interchanging order of summation, we see
	\[ J(t) = \sum_{n \ge 1} n^{r-2} J_{nx}(t), \qquad J_{nx}(t):= \int_{T}^{t}\left( \frac{2\pi enx}{u}\right)^{iu}{\rm d}u=\int_{T}^{t}e^{iF_{nx}(u)}{\rm d}u\]
	for
	\[ F_{a}(u):= u \log \left(\frac{2\pi ea}{u}\right), \qquad F'_a(u) = \log\left( \frac{2\pi a}{u}\right).\]
	The theory of oscillatory integrals easily bounds $J_{nx}$. Indeed, $F_a'$ is monotone decreasing in $(0,\infty)$. In $(0,a]$, $F'_a(u)\ge 1$, while in $[20a,\infty)$ we have $F'_a(u)\le -1$. Thus, Lemma 4.2 of \cite{Titchmarsh} implies that  $\int_{I} e^{iF_{a}(u)} {\rm d}u \ll1$ 	for any interval $I$ that satisfies $I \subseteq [1,a]\cup [20a,\infty)$. It follows that \[J_{nx}(t) \ll 1 + \left|\int_{I_{nx}} e^{iF_{nx}(u)}{\rm d}u\right|\]
	for the (possibly empty) interval $I_{nx} = [T,t]\cap [nx,20nx]$. If $nx \ge t$, then $I_{nx}= \varnothing$ and so $J_{nx}(t) \ll 1$. Otherwise, since $F''_a(u) = -1/u\ll -1/a$ if $u \in [a,20a]$, Lemma 4.4 of \cite{Titchmarsh} implies that 
	\[	\int_{I_{nx}} e^{iF_{nx}(u)}{\rm d}u \ll \sqrt{nx}.\]
	In summary, for all $n$, $x$ and $t$,
	\begin{equation}\label{eq:unif}
		J_{nx}(t) \ll 1 + \sqrt{nx} \mathbf{1}_{t>nx}.
	\end{equation}
	Integration by parts allows us to express \eqref{eq:MTint} as
	\begin{equation}\label{eq:ibp} \left(\frac{1}{2}+r\right)x^r \int_{T}^{\infty} J(t) t^{-\frac{3}{2}-r} {\rm d}t = \left(\frac{1}{2}+r\right)x^r\sum_{n\ge 1} n^{r-2} \int_{T}^{\infty} J_{nx}(t) t^{-\frac{3}{2}-r}{\rm d}t . 
	\end{equation}
	To conclude we input the estimate \eqref{eq:unif} into \eqref{eq:ibp} and simplify.
\end{proof}
For $\Re s> 0$ let 
\begin{equation}\label{eq:Gys}
	G_y(s) : =\sum_{n \ge 1} \frac{g_y(n)}{n^s} = \prod_{2<p\le y}\left(1+\frac{1}{(p-2)p^{s-1}}\right).
\end{equation}
The sum and product in \eqref{eq:Gys} are finite since $g_y$ has finite support. 
We consider the integral
\begin{equation}\label{eq:Ic} I_r(x,y):=\frac{1}{2\pi i}\int_{(r)} x^{s}  G_y(s)\frac{\zeta(s-1){\rm d}s}{s(s-1)}.
\end{equation}
By Lemma~\ref{lem:perron rep}, $I_r$ converges absolutely for $r \in (1/2,1)$, and in principal value sense for $r \in (0,1)$.
\begin{cor}\label{cor:ic}
	Given $r \in (0,1)$, $y \ge 2$, $x \ge 1$ and $T \ge 1$ we have, for $I_r$ defined in \eqref{eq:Ic},
	\begin{align*}
		&\sum_{n \ge 1}  g_y(n)\left\{ \frac{x}{n}\right\} \left(1-\left\{ \frac{x}{n}\right\}\right) \\
		&= 2I_r(x,y)= \frac{2}{2\pi i}
		\int_{r-iT}^{r+iT} x^s G_y(s)\frac{\zeta(s-1)}{s(s-1)}{\rm d}s + O\bigg( \frac{x^rG_y(r) }{T^{\frac{1}{2}+r}(1-r)}+ \sum_{n \ge 1} g_y(n) a_{r}(x/n,T)\bigg)
	\end{align*}
	where $a_r$ is defined in \eqref{eq:ardef}.
\end{cor}
\begin{proof}
	For each $n$ in the (finite) support of $g_y$ we apply Lemma~\ref{lem:perron rep} and Proposition~\ref{prop:tailI} with $x/n$ in place of $x$, multiply by $g_y(n)$, and sum over $n$.
\end{proof}

\begin{remark}
	In the course of establishing Proposition~\ref{prop:sieveapproach}, we derived  \eqref{eq:asneeded} from \eqref{eq:usingf} and \eqref{eq:APFloor} using an elementary argument. We demonstrate how one can deduce \eqref{eq:asneeded} from \eqref{eq:usingf} via complex analysis, in the spirit of \cite{Montgomery2002}. By \eqref{eq:usingf}, \eqref{eq:APFloor} and \eqref{eq:zetassminus}, it follows that
	\[  2\sum_{1 \le k < \frac{H}{2}} \left(\frac{H}{2}-k\right) \prod_{2<p\le y} \left(1-\frac{\nu_p(0,k)}{p}\right)= \frac{2C_y}{2\pi i}\int_{(r)} (H/2)^{s} \zeta(s-1) G_y(s)\frac{{\rm d}s}{s(s-1)} \]
	for every $r>2$, where $G_y$ is defined in \eqref{eq:Gys}. We shift the contour to the left, namely to $(r')$ for some $r'\in (1/2,1)$. The shift to $(r')$, as well as convergence on the line $(r')$ is justified by the bounds in Lemmas~\ref{lem:functional}--\ref{lem:convex}. By Cauchy's residue theorem, we pick up two residues due to two simple poles: one is at $s=2$, due to $\zeta(s-1)$, where the residue is $H^2 G_y(2)/8$, which, after multiplying by $2C_y$, cancels with $-H^2\Prob^2_y$. Another pole is at $s=1$, due to $1/(s-1)$, where the residue is $H\zeta(0) G_y(1)/2$ which, after multiplying by $2C_y$, cancels  with $H\Prob_y$. We find that
	\begin{equation}\label{eq:asneeded2}
		V(X,H,y) = \frac{C_y}{\pi i}\int_{(r')} (H/2)^{s} \zeta(s-1) G_y(s)\frac{{\rm d}s}{s(s-1)} + O\left( H^2\Prob_y  u^{-u(1+o(1))}+\frac{H^2 \Psiomega(y^u,y)}{X}\right)
	\end{equation}
	for every $r' \in (1/2,1)$. The main term in \eqref{eq:asneeded2} coincides with $M(H,y)$ by the first equality in Corollary~\ref{cor:ic}, giving \eqref{eq:asneeded}. This gives a complex analytic proof of Proposition~\ref{prop:sieveapproach}. We used poles of the integrand $(H/2)^s \zeta(s-1)G_y(s)/(s(s-1))$ at $s=1$ and $s=2$ to cancel the contribution of the diagonal terms $H\Prob_y - H^2 \Prob_y^2$. A similar phenomenon occurs when studying the variance of primes, see the very last paragraph of \cite{Montgomery2002}, where the authors say ``it seems fortuitous that $T(1)=U(0)=2/c$ and that $U'(0)=0$. But miracles do not happen by accident, so it seems that there is something going here that remains to be understood''. We see that this phenomenon repeats when computing the variance of $\alpha_y$.
\end{remark}
\subsection{Saddle point analysis}
For $\Re s >0$ let $\zeta(s,y)=\prod_{p \le y}(1-p^{-s})^{-1}$ be the partial zeta function and
\[ \zeta_1(s,y) := \frac{\zeta(s,y)}{\zeta(2s,y)} = \prod_{p \le y} (1+p^{-s}) = \sum_{n\text{ is }y\text{-smooth}}\mu^2(n)n^{-s}.\]	  
Let $\alpha=\alpha(x,y)$ be the unique  solution to the following equation in $s>0$:
\[\log x = \sum_{p \le y} \frac{\log p}{p^{s}+1} = -\frac{\partial}{\partial s}\log \zeta_1(s,y).\]
We use the notation $\alpha$ throughout. Since $\sum_{p \le y} \log p/(p^s+1)$ is decreasing in $s$, we see that $\alpha$ exists as long as $\vartheta(y) > 2 \log x$, where
\[\vartheta(y):=\sum_{p \le y} \log p \sim y\]
is the first Chebyshev function. In this subsection we make use of the results in La Bret\`eche and Tenenbaum \cite{LaB} where $\Psi_{\mu^2}$ is studied via the saddle point method applied to $\zeta_1$.
\begin{lem}\cite[Lem.~1]{Hildebrand1984}\label{lem:hildlem1}
	Given $u \ge 1$ let $\xi=\xi(u) \ge 0$ be the solution to $e^{\xi}=1+u\xi$. We have $\xi(u) \sim \log u$ as $u \to \infty$.
	\end{lem}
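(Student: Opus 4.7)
The plan is to bootstrap the defining equation $e^{\xi}=1+u\xi$ by taking logarithms and iterating twice. First I would dispose of existence and qualitative behavior: for $u>1$ the function $\xi\mapsto e^{\xi}-1-u\xi$ has derivative $e^{\xi}-u$, is negative for small $\xi>0$, tends to $+\infty$, and so has a unique positive root, which is strictly increasing in $u$. For $u=O(1)$ both sides of the claimed estimate are $O(1)$, so I would henceforth assume $u$ is large.

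Next, rewriting the equation as $\xi=\log(1+u\xi)$, I would obtain the crude two-sided bound $\xi\asymp\log u$. On the one hand $e^{\xi}\ge u\xi$ gives $\xi\ge\log(u\xi)\ge\log u$ as soon as $\xi\ge 1$; on the other, $\xi=\log(1+u\xi)\le\log(2u\xi)$ yields $\xi-\log\xi\le\log u+\log 2$, and since $\xi-\log\xi\ge \xi/2$ for $\xi\ge 1$ this forces $\xi\le 2\log u+O(1)$. In particular $\xi\asymp\log u$ for large $u$, so $u\xi\asymp u\log u$.

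Now I iterate. Expanding
\[\xi=\log(1+u\xi)=\log(u\xi)+\log\!\Bigl(1+\tfrac{1}{u\xi}\Bigr)=\log u+\log\xi+O\!\Bigl(\tfrac{1}{u\log u}\Bigr)\]
uses only $u\xi\asymp u\log u$. Taking the logarithm of this identity and exploiting $\log\xi=O(\log\log u)$ and $\xi\asymp\log u$, I would deduce
\[\log\xi=\log\log u+\log\!\Bigl(1+\tfrac{\log\xi}{\log u}+O\!\bigl(\tfrac{1}{u\log^{2}u}\bigr)\Bigr)=\log\log u+O\!\Bigl(\tfrac{\log\log u}{\log u}\Bigr).\]
Substituting back into the previous display yields
\[\xi=\log u+\log\log u+O\!\Bigl(\tfrac{\log\log u}{\log u}\Bigr)=\log(u\log u)+O\!\Bigl(\tfrac{\log\log u}{\log u}\Bigr),\]
which is the claim.

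There is no real obstacle; the argument is a textbook two-stage bootstrap. The only subtlety worth tracking carefully is that one must secure the quantitative bound $\xi\asymp\log u$ \emph{before} invoking $\log(1+1/(u\xi))=O(1/(u\log u))$, since otherwise the second iteration produces only a vague $O(1/(u\xi))$ error that is insufficient to close the argument in the sharp form stated.
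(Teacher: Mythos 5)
Your argument is correct, and it is essentially the standard two-stage bootstrap that Hildebrand himself uses for this lemma; the paper simply cites \cite[Lem.~1]{Hildebrand1986} without reproducing the proof. Each step checks out: $\xi>\log u$ follows directly since $e^{\xi}-1-u\xi$ is negative at $\xi=\log u$ (for $u>1$), the upper bound $\xi\ll\log u$ via $\xi-\log\xi\le\log u+\log 2$ and $\xi-\log\xi\ge\xi/2$ is valid, and the two iterations of $\log$ deliver exactly the claimed error $O(\log\log u/\log u)$, with the $O(1/(u\log u))$ contribution absorbed. Your closing caution — that the a priori bound $\xi\asymp\log u$ must be secured before one can quantify $\log(1+1/(u\xi))$ — is precisely the point that makes the argument close, and it is handled correctly.
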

\begin{lem}\cite[Lem.~2.8]{LaB}\label{lem:hild2}
	Fix $\varepsilon>0$. If $x \ge y \ge(\log x)^{1+\varepsilon}$ and $x\ge C_{\varepsilon}$, then (in the notation of Lemma~\ref{lem:hildlem1}) \[ \alpha = 1- \frac{\xi(u)}{\log y}+ O_{\varepsilon}\left(\frac{1}{\log x \log y} + e^{-\sqrt{\log y}}\right).\]
	If $(\log x)^3 \ge \vartheta(y)>2 \log x$, then
	\[ \alpha = \frac{1+O(1/\log y)}{\log y}\log \left( \frac{\vartheta(y)}{\log x}-1\right).\]
\end{lem}
\begin{lem}\cite[Lem.~2.10]{LaB}\label{lem:hild} Suppose $x \ge y \ge 2$ and $\vartheta(y)>2\log x$. If $|t|\le 1/\log y$, then
\[ \left| \frac{\zeta_1(\alpha+it,y)}{\zeta_1(\alpha,y)}\right| \le \exp\left(-ct^2 \log x \log y \right) \]
	for some absolute $c>0$. For any $\varepsilon>0$ we have
	\begin{equation}\label{eq:uratio}
	\left| \frac{\zeta_1(\alpha+it,y)}{\zeta_1(\alpha,y)}\right| \le \exp\left( -\frac{cut^2}{(1-\alpha)^2+t^2}\right)
		\end{equation}
	for $x \ge C_{\varepsilon}$, $1/\log y < |t| \le \exp((\log y)^{3/2-\varepsilon})$. Here $u=\log x / \log y$.
\end{lem}
Recall $G_y$ was defined in \eqref{eq:Gys}. We define $H_y$ by 
\begin{equation} \label{eq:defh}
	G_y(s) = H_y(s) \zeta_1(s,y).
\end{equation}
The Euler product for $H_y$ is
\begin{equation}\label{eq:HyEuler}
		H_y(s) = (1+2^{-s})^{-1} \prod_{2<p \le y} \left( 1+\frac{2}{(p^{s}+1)(p-2)}\right).
\end{equation}
\begin{lem}\label{lem:hy}
	Fix $\varepsilon>0$. Suppose $x \ge y\ge (2+\varepsilon)\log x$ and $x \ge C_{\varepsilon}$. We have $1\ll H_y(\alpha) \ll \log y$ and $H_y(\alpha+it) \ll_{\varepsilon} \exp(C_{\varepsilon}\log y/\log \log y)$ for all $t \in \RR$. If $y \ge (\log x)^{1+\varepsilon}$, then $H_y(\alpha+it) \ll_{\varepsilon} 1$ for all $t \in \RR$.
\end{lem}
\begin{proof}
	By Mertens' theorem, $H_y(0) \ll \log y$, and $H_y(1)\gg 1$. This gives the bounds on $H_y(\alpha)$. For the  bounds on $H_y(\alpha+it)$ we first observe
	\begin{equation}\label{eq:plower}
	|p^{\alpha+it}+1| \ge p^{\alpha}-1 \ge \alpha \log p.
\end{equation}
If $y \ge (2+\varepsilon)\log x$, then $|p^{\alpha+it}+1| \gg_{\varepsilon} \log p /\log y$ by \eqref{eq:plower} and Lemma~\ref{lem:hild2}. Hence
	\[ H_y(\alpha+it) \ll_{\varepsilon} \log y \prod_{2 <p \le y} \left(1 + \frac{C_{\varepsilon}\log y}{p\log p}\right)\]
	which is  $\ll_{\varepsilon} \exp(C_{\varepsilon}\log y/\log \log y)$ as needed. If $y \ge (\log x)^{1+\varepsilon}$, then $\alpha \gg_{\varepsilon} 1$ by Lemma~\ref{lem:hild2} and so 
	\[ H_y(\alpha+it) \ll_{\varepsilon} \prod_{2 <p \le y} (1+C_{\varepsilon}/(p\log p)) \ll_{\varepsilon} 1 \]
	by \eqref{eq:plower} and Chebyshev's bound $\vartheta(t) =O( t)$.
\end{proof}
\begin{lem}\label{lem:taillemma}
Fix $\varepsilon>0$. Suppose $(\log x)^{3/2} \ge y \ge (2+\varepsilon)\log x$ and $x \ge C_{\varepsilon}$. For $1 \le T \le \exp(c\sqrt{\log x \log y})$,
\[ \sum_{n\ge 1} g_y(n) \min\{(x/(nT))^{1/2+\alpha},1\} \ll (\log y)^2 \frac{\Psi_{\mu^2}(x,y)}{T^{\alpha}}.\]
\end{lem}
\begin{proof}
Since $g_y(n) \ll (\log y)^2 \mu^2(n)\mathbf{1}_{n\text{ is }y\text{-smooth}}$, splitting the range of $n$ into dyadic intervals shows
\[ \sum_{n\ge 1} g_y(n) \min\{(x/(nT))^{1/2+\alpha},1\} \ll (\log y)^2 \sum_{k\ge 0} \frac{\Psi_{\mu^2}(2^{k} x/T,y)}{2^{k(1/2+\alpha)}}.\]
It is known that $\Psi_{\mu^2}(2x_0,y) \le 2^{\alpha(x_0,y)}(1+C\sqrt{\log y/\log x_0}) \Psi_{\mu^2}( x_0,y)$ holds for $\vartheta(y)>2\log (2 x_0)$ \cite[Cor.~2.4]{LaB} and $\Psi_{\mu^2}(2x_0,y)\le 2^{\pi(y)}$ otherwise. These inequalities imply
\[ \sum_{k\ge 0} \frac{\Psi_{\mu^2}(2^{k} x/T,y)}{2^{k(1/2+\alpha)}} \ll  \Psi_{\mu^2}(x/T,y) + \frac{2^{\pi(y)}}{(e^{\vartheta(y)/2}T/x)^{1/2+\alpha}} \le \Psi_{\mu^2}(x/T,y)+1 \le 2\Psi_{\mu^2}(x/T,y) .\]
Iterating the inequality $\Psi_{\mu^2}(x_0,y)\le \Psi_{\mu^2}( ax_0,y)(1+C\sqrt{\log y/\log x_0})/a^{\alpha(x_0,y)} $ which holds for $\vartheta(y)>2\log (a x_0)$ and $1 \le a \le y$ \cite[Cor.~2.4]{LaB}, we find $\Psi_{\mu^2}(x/T,y) \ll \Psi_{\mu^2}(x,y)/T^{\alpha}$.	
\end{proof}
\begin{proposition}\label{prop:saddle prepare}
	Fix $\varepsilon>0$. Suppose $x \ge y \ge (2+\varepsilon)\log x$ and $x \ge C_{\varepsilon}$. Suppose further that $u \ge (\log \log (2y))^5$ where $u=\log x/\log y$. Then, for $T=\exp(\min\{cu^{1/3},(\log y)^{3/2-\varepsilon}\})$,
\[		I_{\alpha}(x,y) = \frac{1}{2\pi i}\int_{\alpha-\frac{i}{u^{1/3}\log y}}^{\alpha+\frac{i}{u^{1/3}\log y}}x^{s} G_y(s)\frac{\zeta(s-1){\rm d}s}{s(s-1)} + O_{\varepsilon}\left( \frac{x^{\alpha}\zeta_1(\alpha,y)H_y(\alpha)}{ T^{\frac{1}{4}}(1-\alpha)\alpha} + \frac{(\log y)^2\Psi_{\mu^2}(x,y)}{ T^{\alpha}}\right),\]
for $I_{\alpha}$ defined in \eqref{eq:Ic}.
\end{proposition}
\begin{proof}
Let $T$ be as in the statement of the proposition. By Corollary~\ref{cor:ic} with $r=\alpha$, 
	\begin{equation}\label{eq:ialphae1e2} I_{\alpha}(x,y) = \frac{1}{2\pi i}\int_{\alpha-iT}^{\alpha+iT} x^s G_y(s) \frac{\zeta(s-1){\rm d}s}{s(s-1)} + O(E_1 + E_2)
		\end{equation}
	for
	\[ 	E_1 = \frac{x^{\alpha}G_y(\alpha)}{T^{\frac{1}{2}+\alpha}(1-\alpha)}=\frac{x^{\alpha}\zeta_1(\alpha,y)H_y(\alpha)}{T^{\frac{1}{2}+\alpha}(1-\alpha)}  \qquad \text{and}\qquad E_2 = \sum_{n \ge 1} g_y(n) a_{\alpha}(x/n,T),\]
	where $a_{\alpha}$ is defined in \eqref{eq:ardef}. Clearly $E_1$ is acceptable. To bound $E_2$, we use the estimate $a_{\alpha}(x/n,T) \ll (x/(nT))^{\alpha}$ which implies
	\[ E_2 \ll (x/T)^{\alpha}G_y(\alpha) = \frac{x^{\alpha}\zeta_1(\alpha,y)H_y(\alpha)}{T^{\alpha}}.\]
	If $y > (\log x)^{3/2}$, then $\alpha \ge 1/4$ by Lemma~\ref{lem:hild2} and so $E_2$ is acceptable. If $y \le (\log x)^{3/2}$, then $\alpha \le 2/5$ and so $a_{\alpha}(x/n,T) \ll \min\{1,(x/(nT))^{1/2+\alpha}\}$. Thus
	\[ E_2 \ll \sum_{n \ge 1} g_y(n) \min\{(x/(nT))^{1/2+\alpha},1\},\]
	which is acceptable by Lemma~\ref{lem:taillemma}.
	Next we bound the contribution of $|\Im s| \in [1/\log y, T]$ to the integral in \eqref{eq:ialphae1e2}. By the functional equation as in Lemmas~\ref{lem:functional}--\ref{lem:chisize}, and  \eqref{eq:uratio},
	\begin{multline}\label{eq:above1log}
			\int_{\alpha+\frac{i}{\log y}}^{\alpha+iT}x^s G_y(s)\frac{\zeta(s-1)  {\rm d}s}{s(s-1)}\ll  x^{\alpha} \zeta_1(\alpha,y)\int_{\frac{1}{\log y}}^{T} \frac{(t+2)^{3/2-\alpha}\log (y(t+2))}{(t+\alpha)(t+1-\alpha)} \left| \frac{\zeta_1(\alpha+it,y)}{\zeta_1(\alpha,y)}\right| |H_y(\alpha+it)| {\rm d}t\\
			 \le \frac{x^{\alpha}\zeta_1(\alpha,y)}{(1-\alpha)\alpha}  \int_{\frac{1}{\log y}}^{T} (t+2)^{3/2-\alpha}\log (y(t+2)) \exp\left(  \frac{-c ut^2}{(1-\alpha)^2+t^2} \right) |H_y(\alpha+it)| {\rm d}t.
	\end{multline}
	Here the $\log(y(t+2))$ factor is a bound for $\zeta(2-s)$ (following from Lemma \ref{lem:convex}) which is needed if $\alpha$ is close to $1$. The contribution of $\max\{1-\alpha,1/\log y\} \le t \le T$ to the integral in the right-hand side of \eqref{eq:above1log} is
	\[ \ll \max_{t \in \RR}|H_y(\alpha+it)| \exp(-cu)\log y\int_{0}^{T} (t+2)^{2}{\rm d}t \ll \max_{t \in \RR}|H_y(\alpha+it)|\exp(-cu) T^{3}\]
	which is acceptable by Lemma~\ref{lem:hy}. The contribution of $1/\log y \le t \le \max\{1-\alpha,1/\log y\}$ to the integral in the right-hand side of \eqref{eq:above1log} is $0$ if $1/\log y \ge 1-\alpha$ and otherwise is (by Lemma~\ref{lem:hild2})
\[		\ll \log y \int_{\frac{1}{\log y}}^{1-\alpha}  \exp\left(\frac{-cu t^2}{(1-\alpha)^2} \right)|H_y(\alpha+it)| {\rm d}t \ll_{\varepsilon} \max_{|t| \le 1}|H_y(\alpha+it)|\exp\left( \frac{-c_{\varepsilon}u}{ \log^2 u}\right)\]
	which is also acceptable by Lemma~\ref{lem:hy}. Finally we turn to the contribution of $\int_{\alpha+i/(u^{1/3}\log y)}^{\alpha+i/\log y}$ to the integral in \eqref{eq:ialphae1e2}. Using the first part of Lemma~\ref{lem:hild}, it is at most $Cx^{\alpha}\zeta_1(\alpha,y)/((1-\alpha)\alpha)$ times
	\[\max_{|t|\le 1}|H_y(\alpha+it)|u^{1/3}\log y \int_{\frac{1}{u^{1/3}\log y}}^{\frac{1}{\log y}} \exp( -ct^2 \log x \log y){\rm d}t \ll \max_{|t|\le 1}|H_y(\alpha+it)|\exp(-cu^{1/3}), \]
	which is also acceptable. Here $O(u^{1/3}\log y)$ is a bound on $\zeta(2-s)$ where $\Re s = \alpha$ and $|\Im s| \ge 1/(u^{1/3}\log y)$.
\end{proof}
\begin{lem}\label{lem:h size}
Let $\beta \in (0,1]$ and $y\ge 2$.  We have $|(\log H_y(\beta+it))^{(j)}| \ll_{j} \beta^{-j}$ for $|t|\le c/\log y$ and $j \ge 1$. 
\end{lem}
\begin{proof}
When $|t| \le c/\log y$ and $p \le y$ we have
\begin{equation}\label{eq:low1}
|p^{\beta+it}+1| = |p^{\beta} (1+it\log p + O(t^2\log ^2p))+1| \gg p^{\beta}.
\end{equation}
Similarly, when $|t| \le c/\log y$ and $2<p \le y$ we have
\begin{equation}\label{eq:low2}
|p^{\beta+it}+1+2/(p-2)| \gg p^{\beta}.
\end{equation}
Let  $f_p(s) :=1+2/((p^s+1)(p-2))$ for each $p > 2$. By \eqref{eq:HyEuler}, $\log H_y(s)  =\sum_{2<p \le y} \log f_p(s) -\log(1+2^{-s})$. Induction on $j$ shows that
\[ (\log f_p(s))^{(j)} = \frac{(\log p)^j P_{2j-1}(p^s)}{(p-2)((p^s+1)(p^s+1+\frac{2}{p-2}))^j}\]
for $j \ge 1$, where $P_{2j-1}$ is a polynomial of degree $2j-1$ whose coefficients are $\ll_j 1$. Thus, from \eqref{eq:low1}--\eqref{eq:low2},
\[|(\log H_y(\beta+it))^{(j)}| \ll_j 1+\sum_{2<p\le y} \frac{(\log p )^j}{p^{1+\beta}} \ll_j \sum_{p\le y} \frac{(\log p )^j}{p^{1+\beta}} \ll_j \int_{2}^{y} \frac{(\log t)^{j-1}}{t^{1+\beta}}{\rm d}t.\]
The change of variables $t=e^{v/\beta}$ shows that the last integral is $\ll_j \beta^{-j}$.
\end{proof}

\begin{proposition}\label{prop:i is psi}
	Fix $\varepsilon>0$.	Suppose $x \ge y \ge (2+\varepsilon)\log x$, $u \ge (\log \log (2y))^5$ and $x \ge C_{\varepsilon}$ where $u=\log x/\log y $. Then
	\[ I_{\alpha}(x,y) = \frac{H_y(\alpha)\zeta(\alpha-1)}{\alpha-1} \Psi_{\mu^2}(x,y)(1+O_{\varepsilon}(u^{-1}+ \exp(-\alpha (\log y)^{3/2-\varepsilon})))\]
	for $I_{\alpha}(x,y)$ defined in \eqref{eq:Ic}.
\end{proposition}
\begin{proof}
	We consider $s=\alpha+it$ with $|t| \le 1/(u^{1/3}\log y)$. As in the proof of \cite[Lem.~11]{Hildebrand1986}, we have the following expansion:
	\begin{equation}\label{eq:taylorpartialzeta}
		\begin{split}
		\frac{\zeta_1(s,y) x^{s}}{s} &= \frac{\zeta_1(\alpha,y)x^{\alpha}}{s}e^{-\frac{\sigma_2 t^2}{2} -i\frac{\sigma_3t^3}{6}+ O(  \sigma_4t^4)}\\
		&=\frac{\zeta_1(\alpha,y)x^{\alpha}}{\alpha}e^{-\frac{\sigma_2 t^2}{2}} \left(1-\frac{it}{\alpha}-i\frac{\sigma_3t^3}{6}+O_{\varepsilon}\left( \sigma_3^2 t^6 +  \alpha^{-2}t^2+ \sigma_4 t^4\right)\right)
		\end{split}
	\end{equation}
	where $|\sigma_k| \ll_{\varepsilon} (\log y)^{k-1} \log x$  for $k =3,4$ and $\sigma_2\asymp_{\varepsilon} \log x \log y$ \cite[Lem.~2.9]{LaB}.
	We also have
	\[ \zeta(s-1)= \zeta(\alpha-1) (1 + a_1 t + a_2t^2 + a_3 t^3 + O(t^4))\]
	for bounded $a_i$ and 
	\[ H_y(s)= H_y(\alpha) (1 + b_1 t + b_2t^2 + b_3 t^3 + O_{\varepsilon}(b_4 t^4))\]
	for $b_i\ll_{i,\varepsilon} \alpha^{-i}$ by Lemma~\ref{lem:h size} with $\beta=\alpha$. For $1/(s-1)$, 
	\[ \frac{1}{s-1} = \frac{1}{\alpha-1} \left( 1- \frac{it}{\alpha-1}- \frac{t^2}{(\alpha-1)^2} +\frac{it^3}{(\alpha-1)^3} + O\left( \frac{t^4}{(\alpha-1)^4}\right)\right).\]
	Hence
	\[ \frac{\zeta(s-1) H_y(s)}{s-1} = \frac{\zeta(\alpha-1)H_y(\alpha)}{\alpha-1} \left( 1+c_1t+c_3t^3 + O_{\varepsilon}\left( \alpha^{-2}(1-\alpha)^{-2}t^2\right)\right)\]
	where $c_i \ll_{\varepsilon}\alpha^{-i}(1-\alpha)^{-i}$. Multiplying \eqref{eq:taylorpartialzeta} by the Taylor expansion for $(\zeta(s-1) H_y(s))/(s-1)$ and integrating over $|t| \le 1/(u^{1/3}\log y)$, we have 
	\[ \int_{-\frac{1}{u^{1/3}\log y}}^{\frac{1}{u^{1/3}\log y}} \frac{\zeta_1(\alpha+it,y)x^{\alpha+it}\zeta(\alpha-1+it)H_y(\alpha+it)}{(\alpha+it)(\alpha+it-1)}{\rm d}t \\= \frac{\zeta_1(\alpha,y)x^{\alpha}\zeta(\alpha-1)H_y(\alpha)}{\alpha(\alpha-1)} \int_{-\frac{1}{u^{1/3}\log y}}^{\frac{1}{u^{1/3}\log y}} e^{-\frac{\sigma_2 t^2}{2}}F(t){\rm d}t\]
	where
	\[ F =1+ d_1 t+ d_2t^3 + O_{\varepsilon}\left(\left(\sigma_3^2+\frac{\sigma_3}{\alpha^3(1-\alpha)^3}\right)t^6 +\left( \alpha^{-2}(1-\alpha)^{-2}\right) t^2 +  \left(\sigma_4+\frac{\sigma_3}{\alpha(1-\alpha)}+\frac{1}{\alpha^4(1-\alpha)^3}\right)t^4 \right)\]
	for some $d_1$, $d_2$. The contribution of $d_1 t$ and $d_2 t^3$ is $0$ because $e^{-\sigma_2 t^2/2}t^{2i+1}$ is odd.
	By Lemma~\ref{lem:hild2}, we have $\alpha, 1-\alpha\gg_{\varepsilon} 1/\log y$. Performing the change of variables $\sigma_2 t^2 = s^2$, and noting that
	\[ \frac{1}{\sqrt{2\pi}} \int_{-R}^{R} e^{-v^2/2}{\rm d}v =1+O(\exp(-cR^2)), \qquad \frac{1}{\sqrt{2\pi}} \int_{\RR} e^{-v^2/2}v^{2k} {\rm d}v \ll_k 1,\]
	it follows that
	\[ \int_{-\frac{1}{u^{1/3}\log y}}^{\frac{1}{u^{1/3}\log y}} \frac{\zeta_1(\alpha+it,y)x^{\alpha+it}\zeta(\alpha-1+it)H_y(\alpha+it)}{(\alpha+it)(\alpha+it-1)}{\rm d}t = \sqrt{\frac{2\pi}{\sigma_2}}\frac{\zeta_1(\alpha,y)x^{\alpha}\zeta(\alpha-1)H_y(\alpha)}{\alpha(\alpha-1)}(1+O_{\varepsilon}(u^{-1})).\]
	Plugging this in Proposition~\ref{prop:saddle prepare} we obtain
	\[ I_{\alpha}(x,y) = \frac{1}{\sqrt{2\pi\sigma_2}}\frac{\zeta_1(\alpha,y)x^{\alpha}\zeta(\alpha-1)H_y(\alpha)}{\alpha(\alpha-1)}(1+O_{\varepsilon}(u^{-1})) + O\left(\frac{(\log y)^2\Psi_{\mu^2}(x,y)}{ T^{\alpha}}\right),\]
for the $T$ in the statement of Proposition~\ref{prop:saddle prepare}. 	In \cite[Thm.~2.1]{LaB}, it is shown that
\[ \Psi_{\mu^2}(x,y) = \frac{x^{\alpha}\zeta_1(\alpha,y)}{\alpha\sqrt{2\pi \sigma_2}} (1+O_{\varepsilon}(u^{-1}))\]
if $y \ge (2+\varepsilon)\log x$.
	The last two estimates imply the desired conclusion.
\end{proof}
\section{Conclusion of proofs}\label{sec:final}
\subsection{Proof of Theorem~\ref{thm:mt}}
\subsubsection{Proof of 1st part of Theorem~\ref{thm:mt}}
Let $u=\log H/\log y$ and $u'=\log (H/2)/\log y$. We establish \eqref{eq:M2} in slightly stronger form: we show that if $y \ge \exp((\log \log H)^{5/3+\varepsilon})$, then
	\begin{equation}\label{eq:MHstr} M(H,y) = H \Prob_y\lambda(u)(1+O_{\varepsilon}(E))
	\end{equation}
holds where $E=(u+1)^2\log(u+2)/\log H$ if $y \ge \exp((\log H)^{1/2}\log \log H)$ and $E = 1/u + \log(u+2)/\log y$ otherwise. If $y \ge \exp((\log H)^{1/2}\log \log H)$ we apply \eqref{eq:gytail}, Lemma~\ref{lem:tailbnds} and \eqref{eq:ygreater} with $x=H/2$ to obtain 
\begin{multline*}
\sum_{r \ge 1}  g_y(r)\left\{ \frac{H}{2r}\right\} \left(1-\left\{ \frac{H}{2r}\right\}\right) \\= \frac{H}{2} \left( \prod_{2<p \le y}\left(1+\frac{1}{p-2}\right) \left(\lambda(u')\left( 1+O\left( \frac{(u'+1)^2\log(u'+2)}{\log H}\right)\right)\right) + O(\rho(u'))\right).
\end{multline*}
We multiply the last equation by $\prod_{2<p \le y} (1- 2/p)$ and use $\rho(u')\ll \lambda(u')  \log (u'+2)$ which follows from \eqref{eq:lambasym}. In this way we obtain \eqref{eq:MHstr} with $\lambda(u')$ instead of $\lambda(u)$. It now suffices to show
\begin{equation}\label{eq:lambdadiff2}
	\lambda(u') = \lambda(u)\left(1+ O\left(\frac{\log(u+2)}{\log y}\right)\right),
\end{equation}
which we demonstrate in the wider range $y \ge \log H$. If $H\le y$ \eqref{eq:lambdadiff2} follows from \eqref{eq:lam01}. Otherwise we use \eqref{eq:deflam} and \eqref{eq:lambasym} to deduce that
\begin{equation}\label{eq:lambdadiff}
	\frac{\lambda(u-t)-\lambda(u)}{\lambda(u)}\ll \frac{ \rho(u-t)}{\rho(u)}t\log (u+2)
\end{equation}
holds for $u\ge t>0$. By \cite[Lem.~1]{Hildebrand1984}, if $1 \ge t>0$ and $t \ll 1/\log (u+2)$, then $\rho(u-t)/\rho(u) \ll 1$. Thus \eqref{eq:lambdadiff} implies \eqref{eq:lambdadiff2}.

Next consider $\exp((\log H)^{1/2}\log \log H) \ge y \ge \exp((\log \log H)^{5/3+\varepsilon})$. Corollary~\ref{cor:ic} relates $M(H,y)$ to a contour integral estimated in Proposition~\ref{prop:i is psi}, and we obtain 
\begin{equation}\label{eq:MHpsi2}
M(H,y) = 2\prod_{2 <p \le y} \left(1-\frac{2}{p}\right) \frac{H_y(\alpha)\zeta(\alpha-1)}{\alpha-1}\Psi_{\mu^2}(H/2,y)(1+O_{\varepsilon}(1/u))
\end{equation}
where $\alpha=\alpha(H/2,y)$ is defined in \eqref{eq:alphadef} and $H_y$ is defined in  terms of an Euler product in \eqref{eq:HyEuler}. We now simplify: we relate $\Psi_{\mu^2}(H/2,y)$ to $\Psi(H/2,y)/\zeta(2)$ via \eqref{eq:ivict}, $\Psi(H/2,y)$ to $(H/2)\rho(u')$ via \eqref{eq:psirho}, $\rho(u')$ to $\lambda(u')$ via \eqref{eq:lambasym} and $\lambda(u')$ to $\lambda(u)$ via \eqref{eq:lambdadiff2}. We write $\zeta(\alpha-1)$ as $\zeta(0)+O(1-\alpha)$, the $(1+2^{-\alpha})^{-1}$ term in $H_y(\alpha)$ as $2/3+O(1-\alpha)$, and estimate $1-\alpha$ using the first part of Lemma~\ref{lem:hild2}. Furthermore we write $\zeta(2)$ as $\prod_{p \le y}(1-1/p)^{-1}(1+1/p)^{-1} (1+O(1/y))$. We obtain
\begin{equation}\label{eq:Mpre}
 M(H,y) =  H\Prob_y \lambda(u) \frac{e^{\gamma}\log y}{2}\prod_{2 <p \le y} \left(1-\frac{2}{p(1+p^{-\alpha})}\right)\left(1+\frac{1}{p}\right)\left(1+O_{\varepsilon}\left(\frac{1}{u}+\frac{\log (u+2)}{\log y}\right)\right).
\end{equation}
By \eqref{eq:mertens3}, $e^{\gamma}\log y /2 = (1+O(1/\log y))\prod_{2<p \le y}p/(p-1)$, simplifying \eqref{eq:Mpre} further as
\begin{equation}\label{eq:Prod}
 M(H,y) =  H\Prob_y \lambda(u) \prod_{2 <p \le y} \left(1+\frac{2(1-p^{\alpha-1})}{(1-1/p)(p^{1+\alpha}+p)}\right)\left(1+O_{\varepsilon}\left(\frac{1}{u}+\frac{\log (u+2)}{\log y}\right)\right).
\end{equation}
We shall show the product in the right-hand side of \eqref{eq:Prod} is acceptable. We express it as
\[ \prod_{2 <p \le y} \left(1+\frac{2(1-p^{\alpha-1})}{(1-1/p)(p^{1+\alpha}+p)}\right) = \exp\left( O\left(\sum_{p\le y} \frac{1-p^{\alpha-1}}{p^{1+\alpha}}\right)\right).\]
The expression in the exponent is 
\[\ll \sum_{\log p \le 1/(1-\alpha)} \frac{(1-\alpha)\log p}{p^{1+\alpha}} + \sum_{\log p >1/(1-\alpha)} \frac{1}{p^{1+\alpha}} \ll \alpha^{-1} (1-\alpha + (\exp(1/(1-\alpha))-1)^{-\alpha}).\]
By Lemma~\ref{lem:hild2}, this bound is $\ll 1-\alpha \ll \log (u+2)/\log y$.
\subsubsection{Proof of 2nd part of Theorem~\ref{thm:mt}}
Let $u=\log H/\log y$ and $u'=\log(H/2)/\log y$. We establish \eqref{eq:M5} in slightly stronger form: we show that if $H \ge y \ge (\log H)^{2+\varepsilon}$ then
\begin{equation}\label{eq:M5form}
	M(H,y)= \frac{e^{-\gamma}}{\xi(u)}\Prob_y \Psi(H,y)(1+O_{\varepsilon}(E + 1/u))
\end{equation}
for the same $E$ as in \eqref{eq:MHstr}. First consider $H \ge y \ge \exp((\log H)^{1/2}\log \log H)$. In this range, \eqref{eq:M5form} follows from \eqref{eq:MHstr}  once we replace $\lambda(u)$ with $e^{-\gamma}\rho(u)/\xi(u)$ using \eqref{eq:lambasym} and $H\rho(u)$ with $\Psi(H,y)$ using \eqref{eq:psirho}.

Next consider $\exp((\log H)^{1/2}\log \log H) \ge y \ge (\log H)^{2+\varepsilon}$. We shall simplify \eqref{eq:MHpsi2} in order to obtain \eqref{eq:M5form}. We relate $\Psi_{\mu^2}(H/2,y)$ to $\Psi(H/2,y)/\zeta(2)$ via \eqref{eq:ivict} and write $\zeta(\alpha-1)$ as $\zeta(0)+O(1-\alpha)$, the $(1+2^{-\alpha})^{-1}$ term in $H_y(\alpha)$ as $2/3+O(1-\alpha)$, and estimate $1-\alpha$ using the first part of Lemma~\ref{lem:hild2}. Furthermore we write $\zeta(2)$ as $\prod_{p \le y}(1-1/p)^{-1}(1+1/p)^{-1} (1+O(1/y))$. We obtain from \eqref{eq:MHpsi2}
\begin{equation}\label{eq:Mpre2}
	M(H,y) =  \frac{\log y}{\xi(u')}\Prob_y\Psi(H/2,y) \prod_{2 <p \le y} \left(1-\frac{2}{p(1+p^{-\alpha})}\right)\left(1+\frac{1}{p}\right)\left(1+O_{\varepsilon}\left(\frac{1}{u}+\frac{\log (u+2)}{\log y}\right)\right).
\end{equation}
By \eqref{eq:mertens3}, $\log y$ is $(2+O(1/\log y))e^{-\gamma}\prod_{2<p \le y}p/(p-1)$, simplifying \eqref{eq:Mpre2} further as
\[ M(H,y) =  \frac{e^{-\gamma}}{\xi(u')} \Prob_y 2\Psi(H/2,y)  \prod_{2 <p \le y} \left(1+\frac{2(1-p^{\alpha-1})}{(1-1/p)(p^{1+\alpha}+p)}\right)\left(1+O_{\varepsilon}\left(\frac{1}{u}+\frac{\log (u+2)}{\log y}\right)\right).\]
As in the proof of \eqref{eq:MHstr}, the product is $1+O_{\varepsilon}(\log(u+2)/\log y))$. To conclude, we need to simplify $\xi(u')$ and $\Psi(H/2,y)$. Since $\xi'(t)\ll 1/t$ \cite[Lem.~1]{Hildebrand1984}, we have $\xi(u')=\xi(u) (1+O_{\varepsilon}(1/\log H))$ by the mean value theorem. To treat $\Psi(H/2,y)$ we use \cite[Thm.~3]{Hildebrand1986} to find \[ \Psi(H/2,y) =\Psi(H,y)2^{-\alpha'}(1+O(1/u))\] where $\alpha'$ satisfies $\sum_{p \le y} \log p/(p^{\alpha'}-1)=\log (H/2)$. By \cite[Lem.~2]{Hildebrand1986}, $2^{1-\alpha'} = 1+O_{\varepsilon}(\log(u+2)/\log y)$.
\subsubsection{Proof of 3rd part of Theorem~\ref{thm:mt}}
Let $u=\log H/\log y$ and $u'=\log(H/2)/\log y$. We establish \eqref{eq:M4} in slightly stronger form: we show that if $H \ge y \ge (2+\varepsilon)\log H$ then
\begin{equation}\label{eq:M5form2}
	M(H,y)=\frac{2\zeta(\alpha-1)}{\alpha-1} \prod_{p \le y} \left(1-\frac{2}{p(1+p^{-\alpha})} \right) \Psi_{\mu^2}(H,y)(1+O_{\varepsilon}(E'))
\end{equation}
where $E'=1/\sqrt{u}$ if $y\ge (\log H)^2$ and $E'=\exp(-(\log y)^{1/2-2\varepsilon})$ otherwise.

Suppose first that $u \ge (\log \log (2y))^6$. Corollary~\ref{cor:ic} connects $M(H,y)$ with a contour integral which is estimated in Proposition~\ref{prop:i is psi}, and we obtain
\begin{equation}\label{eq:psiest}
	M(H,y)= 2 \prod_{2<p \le y} \left(1- \frac{2}{p}\right)\frac{H_y(\alpha)\zeta(\alpha-1)}{\alpha-1} \Psi_{\mu^2}(H/2,y)(1 + O_{\varepsilon}(1/u +  \exp(-\alpha (\log y)^{3/2-\varepsilon}))),
\end{equation}
where $\alpha=\alpha(H/2,y)$ is defined in \eqref{eq:alphadef} and estimated in Lemma~\ref{lem:hild2}, and $H_y$ is defined in \eqref{eq:defh}. Inputting the Euler product for $H_y$ given in \eqref{eq:HyEuler}, and using the estimate
\[ \Psi_{\mu^2}(H/2,y) = \Psi_{\mu^2}(H,y) 2^{-\alpha} (1+O_{\varepsilon}( 1/\sqrt{u}))\]
given in \cite[Cor.~2.4]{LaB}, yields \eqref{eq:M5form2}.

Suppose now that $1\le u \le (\log \log (2y))^6$. We claim \eqref{eq:M5form2} follows from \eqref{eq:M5form}. Indeed, we can write 
\[ \Psi(H,y) = \zeta(2) \Psi_{\mu^2}(H,y)\left(1+O\left(\frac{\log \log y}{\log y}\right)\right)\]
by \eqref{eq:ivict}, and express 
\[\frac{1}{\xi(u)} = \frac{1+O(1/\log H)}{(1-\alpha)\log y}\]
using Lemma~\ref{lem:hild2} to deduce from \eqref{eq:M5form} that
 \begin{equation}\label{eq:M5form3}
	M(H,y)= \frac{\zeta(2)e^{-\gamma}}{(1-\alpha)\log y} \prod_{p \le y} \left(1-\frac{1}{p}\right) \Psi_{\mu^2}(H,y) (1+O(1/\sqrt{u}))
\end{equation}
holds. Using \eqref{eq:mertens3} we replace $e^{-\gamma}/\log y$ with $\prod_{p\le y}(1-1/p) ( 1+O(1/\log y))$. We write $\zeta(2) = \prod_{p \le y}(1-1/p^2)^{-1} ( 1+O(1/y))$. This transforms \eqref{eq:M5form3} into
\[	M(H,y)= \frac{1}{1-\alpha} \prod_{p \le y} \frac{p-1}{p+1}\Psi_{\mu^2}(H,y) (1+O(1/\sqrt{u})).\]
We are done since, for $1 \le u \le (\log \log (2y))^6$, $-2\zeta(\alpha-1) = 1 + O(\log (u+1)/\log y)$ holds (see Lemma~\ref{lem:hild2}) as well as
\[ \prod_{p \le y} \left(1- \frac{2}{p(1+p^{-\alpha})}\right) = \prod_{p \le y} \frac{p-1}{p+1} (1+O(1/\sqrt{u}))\]
in exactly the same way we estimated the product in \eqref{eq:Prod}.
\subsubsection{Proof of 4th part of Theorem~\ref{thm:mt}}
We establish \eqref{eq:last}. 
If $y \ge H$, then $u \le 1$, so $\log (u+2) \asymp 1$ and $\lambda(u) \asymp 1$. The estimate \eqref{eq:last} is a consequence of \eqref{eq:MHstr} and \eqref{eq:psirho}--\eqref{eq:ivict}. If $u$ is bounded, the same argument still establishes \eqref{eq:last}.

If $H \ge y \ge (\log H)^3$ and $u$ is sufficiently large, \eqref{eq:last} is a consequence of \eqref{eq:M5form} and \eqref{eq:ivict}.

If $(\log H)^3 \ge y \ge (\log H)^{1+\varepsilon}$ and $H \ge C_{\varepsilon}$, then $c_{\varepsilon}\le \alpha \le 1-c$ for some $c,c_{\varepsilon}>0$ by Lemma~\ref{lem:hild2}. Now \eqref{eq:last} is in \eqref{eq:M5form2} once we observe $2\zeta(\alpha-1)/(\alpha-1) \asymp 1$ and $\prod_{p \le y} (1-2/(p(1+p^{-\alpha})) \asymp_{\varepsilon} (\log y)^{-2} \asymp \Prob_y /\log (u+2)$ in the considered range.
\subsection{Proof of Theorem~\ref{thm:ranges}} 
Fix $\varepsilon>0$ and suppose $y \ge (2+\varepsilon)\log H$. Using \eqref{eq:psimu2crude}, Theorem~\ref{thm:mt} implies
\begin{equation}\label{eq:hild crude}
	M(H,y) \gg_{\varepsilon}\Prob_y H^{1-\frac{\log \log H}{\log y}+o(1)}=\Prob_y H^{1-a+o(1)}
\end{equation}
where $a$ is defined in Theorem~\ref{thm:ranges}.
By Proposition~\ref{prop:sieveapproach} and \eqref{eq:hild crude}, a sufficient condition for $V(X,H,y) \sim M(H,y)$ to hold is that there is $u \ge 1$ such that
\[ H^2 \Prob_y  u^{-u(1-\delta)}, \frac{H^2\Psiomega(y^u,y)}{X} \ll \Prob_y H^{1-a-\delta}\]
both hold for some $\delta>0$. The inequality $H^2 \Prob_y u^{-u(1-\delta)} \ll \Prob_y H^{1-a-\delta}$ simplifies to $u^{u(1-\delta)} \gg \Prob_y H^{1+a+\delta}$, so taking
\[ u = \frac{(1+a)\log H}{\log \log H}(1+3\delta)\]
we see that it is satisfied. This means $V(X,H,y) \sim M(H,y)$ shall hold if, for this choice of $u$, we will have
\begin{equation}\label{eq:ucond}
	\frac{H^2\Psiomega(y^u,y)}{X} \ll \Prob_y H^{1-a-\delta}.
\end{equation}
Note that \[ \frac{\log \log (y^u)}{\log y} = \frac{\log \log H}{\log y} + o(1) = a+o(1),\] 
and by the divisor bound, \eqref{eq:psimu2crude} gives
\[ \Psiomega(y^u,y) = y^{u\left( 1- a+o(1)\right)} = H^{\frac{(1+a)}{a}(1+3\delta)\left( 1-a+o(1)\right)}=H^{\frac{1-a^2+o(1)}{a}(1+3\delta)}.\]
Our condition \eqref{eq:ucond} becomes, after isolating the powers of $H$ and taking logarithms,
\[\left(1+a+\delta+\frac{1-a^2+o(1)}{a}(1+3\delta))\right)\log H \le \log X -\log \log y+O(1).\]
Multiplying and dividing the left-hand side by $a=\log \log H/\log y$, and neglecting $\log \log y$ (since it is $o(\log X)$) gives the desired result, that is, $V(X,H,y) \sim M(H,y)$ holds when \eqref{eq:sievecond} is satisfied.

\subsection*{Acknowledgements}
We thank the referee for useful comments and suggestions that improved the present paper.
This project has received funding from the European Research Council (ERC) under the European Union's Horizon 2020 research and innovation programme (grant agreement No 851318).

	\bibliographystyle{abbrv}
	\bibliography{references}

\begin{thebibliography}{10}

\bibitem{DeBruijn1950}
N.~G. de~Bruijn.
\newblock On the number of uncancelled elements in the sieve of {E}ratosthenes.
\newblock {\em Nederl. Akad. Wetensch., Proc.}, 53:803--812 = Indagationes
  Math. 12, 247--256 (1950), 1950.

\bibitem{deBruijn1966}
N.~G. de~Bruijn.
\newblock On the number of positive integers {$\leq x$} and free prime factors
  {$>y$}. {II}.
\newblock {\em Nederl. Akad. Wetensch. Proc. Ser. A 69=Indag. Math.},
  28:239--247, 1966.

\bibitem{dBL}
N.~G. de~Bruijn and J.~H. van Lint.
\newblock Incomplete sums of multiplicative functions. {I}.
\newblock {\em Indag. Math.}, 26:339--347, 1964.
\newblock Nederl. Akad. Wetensch. Proc. Ser. A 67.

\bibitem{LaB}
R.~de~la Bret\`eche and G.~Tenenbaum.
\newblock Sur les lois locales de la r\'{e}partition du {$k$}-i\`eme diviseur
  d'un entier.
\newblock {\em Proc. London Math. Soc. (3)}, 84(2):289--323, 2002.

\bibitem{Friedlander1991}
J.~Friedlander, A.~Granville, A.~Hildebrand, and H.~Maier.
\newblock Oscillation theorems for primes in arithmetic progressions and for
  sifting functions.
\newblock {\em J. Amer. Math. Soc.}, 4(1):25--86, 1991.

\bibitem{Friedlander2010}
J.~Friedlander and H.~Iwaniec.
\newblock {\em Opera de cribro}, volume~57 of {\em American Mathematical
  Society Colloquium Publications}.
\newblock American Mathematical Society, Providence, RI, 2010.

\bibitem{Goldston1987}
D.~A. Goldston and H.~L. Montgomery.
\newblock Pair correlation of zeros and primes in short intervals.
\newblock In {\em Analytic number theory and {D}iophantine problems
  ({S}tillwater, {OK}, 1984)}, volume~70 of {\em Progr. Math.}, pages 183--203.
  Birkh\"{a}user Boston, Boston, MA, 1987.

\bibitem{Gorodetsky2021}
O.~Gorodetsky, K.~Matom\"{a}ki, M.~Radziwi{\l}{\l}, and B.~Rodgers.
\newblock On the variance of squarefree integers in short intervals and
  arithmetic progressions.
\newblock {\em Geom. Funct. Anal.}, 31(1):111--149, 2021.

\bibitem{Granville1989}
A.~Granville.
\newblock On positive integers {$\leq x$} with prime factors {$\leq t\log x$}.
\newblock In {\em Number theory and applications ({B}anff, {AB}, 1988)}, volume
  265 of {\em NATO Adv. Sci. Inst. Ser. C: Math. Phys. Sci.}, pages 403--422.
  Kluwer Acad. Publ., Dordrecht, 1989.

\bibitem{Granville2007}
A.~Granville and K.~Soundararajan.
\newblock An uncertainty principle for arithmetic sequences.
\newblock {\em Ann. of Math. (2)}, 165(2):593--635, 2007.

\bibitem{HR1}
H.~Halberstam and H.-E. Richert.
\newblock Brun's method and the fundamental lemma. {I}.
\newblock {\em Acta Arith.}, 24:113--133, 1973.

\bibitem{HR2}
H.~Halberstam and H.-E. Richert.
\newblock Brun's method and the fundamental lemma. {II}.
\newblock {\em Acta Arith.}, 27:51--59, 1975.

\bibitem{Hall1982}
R.~R. Hall.
\newblock Squarefree numbers on short intervals.
\newblock {\em Mathematika}, 29(1):7--17, 1982.

\bibitem{Hausman1973}
M.~Hausman and H.~N. Shapiro.
\newblock On the mean square distribution of primitive roots of unity.
\newblock {\em Comm. Pure Appl. Math.}, 26:539--547, 1973.

\bibitem{Hildebrand1984}
A.~Hildebrand.
\newblock Integers free of large prime factors and the {R}iemann hypothesis.
\newblock {\em Mathematika}, 31(2):258--271 (1985), 1984.

\bibitem{Hildebrand1985}
A.~Hildebrand.
\newblock Integers free of large prime divisors in short intervals.
\newblock {\em Quart. J. Math. Oxford Ser. (2)}, 36(141):57--69, 1985.

\bibitem{Hildebrand19862}
A.~Hildebrand.
\newblock On the number of positive integers {$\leq x$} and free of prime
  factors {$>y$}.
\newblock {\em J. Number Theory}, 22(3):289--307, 1986.

\bibitem{Hildebrand1986}
A.~Hildebrand and G.~Tenenbaum.
\newblock On integers free of large prime factors.
\newblock {\em Trans. Amer. Math. Soc.}, 296(1):265--290, 1986.

\bibitem{Ivic1986}
A.~Ivi\'{c} and G.~Tenenbaum.
\newblock Local densities over integers free of large prime factors.
\newblock {\em Quart. J. Math. Oxford Ser. (2)}, 37(148):401--417, 1986.

\bibitem{Kuperberg2020}
V.~Kuperberg, B.~Rodgers, and E.~Roditty-Gershon.
\newblock Sums of singular series and primes in short intervals in algebraic
  number fields.
\newblock {\em Ramanujan J.}, 58(2):291--317, 2022.

\bibitem{Montgomery2010}
H.~L. Montgomery.
\newblock The combinatorics of moment calculations.
\newblock {\em Hardy-Ramanujan J.}, 33:2--22, 2010.

\bibitem{Montgomery2002}
H.~L. Montgomery and K.~Soundararajan.
\newblock Beyond pair correlation.
\newblock In {\em Paul {E}rd\H{o}s and his mathematics, {I} ({B}udapest,
  1999)}, volume~11 of {\em Bolyai Soc. Math. Stud.}, pages 507--514. J\'{a}nos
  Bolyai Math. Soc., Budapest, 2002.

\bibitem{Montgomery2004}
H.~L. Montgomery and K.~Soundararajan.
\newblock Primes in short intervals.
\newblock {\em Comm. Math. Phys.}, 252(1-3):589--617, 2004.

\bibitem{Montgomery1986}
H.~L. Montgomery and R.~C. Vaughan.
\newblock On the distribution of reduced residues.
\newblock {\em Ann. of Math. (2)}, 123(2):311--333, 1986.

\bibitem{Montgomery2007}
H.~L. Montgomery and R.~C. Vaughan.
\newblock {\em Multiplicative number theory. {I}. {C}lassical theory},
  volume~97 of {\em Cambridge Studies in Advanced Mathematics}.
\newblock Cambridge University Press, Cambridge, 2007.

\bibitem{Naimi1988}
M.~Naimi.
\newblock Les entiers sans facteurs carr\'{e} {$\leq x$} dont leurs facteurs
  premiers {$\leq y$}.
\newblock In {\em Groupe de travail en th\'{e}orie analytique et
  \'{e}l\'{e}mentaire des nombres, 1986--1987}, volume~88 of {\em Publ. Math.
  Orsay}, pages 69--76. Univ. Paris XI, Orsay, 1988.

\bibitem{Saffari1977}
B.~Saffari and R.~C. Vaughan.
\newblock On the fractional parts of {$x/n$} and related sequences. {II}.
\newblock {\em Ann. Inst. Fourier (Grenoble)}, 27(2):v, 1--30, 1977.

\bibitem{selberg}
A.~Selberg.
\newblock {\em Collected papers. {V}ol. {I}}.
\newblock Springer-Verlag, Berlin, 1989.

\bibitem{Tenenbaum2015}
G.~Tenenbaum.
\newblock {\em Introduction to analytic and probabilistic number theory},
  volume 163 of {\em Graduate Studies in Mathematics}.
\newblock American Mathematical Society, Providence, RI, third edition, 2015.
\newblock Translated from the 2008 French edition by Patrick D. F. Ion.

\bibitem{Tenenbaum2003}
G.~Tenenbaum and J.~Wu.
\newblock Moyennes de certaines fonctions multiplicatives sur les entiers
  friables.
\newblock {\em J. Reine Angew. Math.}, 564:119--166, 2003.

\bibitem{Tenenbaum2008}
G.~Tenenbaum and J.~Wu.
\newblock Moyennes de certaines fonctions multiplicatives sur les entiers
  friables. {III}.
\newblock {\em Compos. Math.}, 144(2):339--376, 2008.

\bibitem{Titchmarsh}
E.~C. Titchmarsh.
\newblock {\em The theory of the {R}iemann zeta-function}.
\newblock The Clarendon Press, Oxford University Press, New York, second
  edition, 1986.
\newblock Edited and with a preface by D. R. Heath-Brown.

\bibitem{vLR}
J.~H. van Lint and H.-E. Richert.
\newblock {\"U}ber die {Summe} {{\(\sum_{n\leq
  x,p(n)<y}\mu^2(n)/\varphi(n)\)}}.
\newblock {\em Nederl. Akad. Wet., Proc., Ser. A}, 67:582--587, 1964.

\end{thebibliography}

	\Addresses
\end{document}